\documentclass[11pt,twoside]{amsart}
\usepackage{pgfplots}
\pgfplotsset{compat=1.15}
\usepackage{mathrsfs}
\usetikzlibrary{arrows}
\usepackage{amsmath, amssymb, amsfonts, amsthm,latexsym,,mathtools,,hyperref,fancyhdr,comment, tabularht, tabularx, longtable, mathrsfs, cleveref,booktabs, color, geometry,bm}
\usepackage{enumitem}
\usepackage{multicol}
\usepackage{hyperref,url}
\usepackage{tikz}
\tikzstyle{vertex}=[circle, draw, inner sep=0pt, minimum size=6pt,fill=black]

\long\def\symbolfootnote[#1]#2{\begingroup%
	\def\thefootnote{\fnsymbol{footnote}}\footnote[#1]{#2}\endgroup}

\allowdisplaybreaks
\def \Z {{\mathbb{Z}}}

\def \F {{\mathbb{F}}}

\def \L {{\mathbb{L}}}

\def \U {{\mathcal{U}}}

\newtheorem*{theorem*}{Theorem}
\newtheorem{theorem}{Theorem}[section]

\newtheorem{lemma}[theorem]{Lemma}

\newtheorem*{ex*}{Example}

\date{}

\begin{document}
\title{Solvability of groups via cyclic subgroup count}
	\author{Angsuman Das}
	\address{ Presidency University, Kolkata, India,}
	\email{angsuman.maths@presiuniv.ac.in}
	\author{Khyati Sharma}
	\address{Indian Institute of Science Education and Research, Berhampur, Odisha, India.}
	\email{khyatisharma0907@gmail.com}

	\subjclass[2020]{20F16, 20D60, 20D25 and 20D20.}
	\today 
	\keywords{number of cyclic subgroups, solvable group, supersolvable group, simple group.}

\begin{abstract}
   In this paper, we provide new criteria for the solvability and supersolvability of a finite group based on its number of cyclic subgroups. A finite group $G$ is called $n$-cyclic if it contains $n$ cyclic subgroups. This paper also partially extends the classification of $n$-cyclic groups for $n\geq 13$.
\end{abstract}

\maketitle
\section{Introduction}
A major goal in the study of finite groups is to understand how far the structure of a group can be recovered from the partial information attached to it. In many cases, quantities obtained by counting certain algebraic objects or by studying probabilistic features of the group can reveal important structural properties. This approach is very useful in finite group theory, mainly because such invariants are often much easier to handle than more complex information, such as the entire subgroup lattice or the character table.

Several important examples show the strength of this approach. For instance, the sum of element orders and the average element order were first introduced by Amiri, Amiri, and Isaacs~\cite{amiri2009sums}. Later, these quantities were further studied deeply in connection with detecting structural properties of finite groups~\cite{herzog2018two, azad2018criterion, tuarnuauceanu2020detecting, herzog2023new}. Another quantity considered in this context is the order sequence~\cite{cameron2023order}. Most recently, Lucchini~\cite{lucchini2026characterizing} characterized solvability using the nilpotent probability, that is, the probability that two randomly chosen elements of a group 
$G$ generate a nilpotent subgroup. In fact, one of the main objectives behind this study is to give new criteria for the solvability of finite groups. 

From this perspective, subgroup-based invariants arise quite naturally. A notable contribution in this direction is due to Das and Mandal~\cite{das2024solvability}, who obtained several criteria for solvability, supersolvability, and nilpotency of a finite group by using subgroup counting. Their work shows that subgroup counting, despite its elementary nature, can yield important structural conclusions. However, counting all subgroups is often difficult in practice, which makes it desirable to look for more accessible alternatives.

 A natural choice is to focus on cyclic subgroups. They are among the simplest subgroups of a finite group, yet their number often reflects meaningful information about the whole group~\cite{garonzi2018number}. Moreover, the number of cyclic subgroups satisfies useful extremal bounds. A result of Richards~\cite{richards1984remark} states that if $G$ is a finite group of order $n$, then $c(G)\geq d(n)$, where $c(G)$ denotes the number of cyclic subgroups of $G$ and $d(n)$ is the number of positive divisors of $n$. Moreover, equality holds if and only if $G$ is cyclic. On the other hand, it is easy to see that $c(G)\leq |G|$, and equality holds if and only if $G$ is an elementary abelian $2$-group. Thus, unlike the counting of all subgroups, the number of cyclic subgroups comes with explicit lower and upper bounds, making it more manageable.
 
This viewpoint has also led to several classification results. Recall that a finite group $G$ is called $n$-cyclic if it has exactly $n$ cyclic subgroups. Zhou~\cite{zhou2016finite}, Kalra~\cite{kalra2019finite}, and Ashrafi and Haghi~\cite{ashrafi2019n} classified all $n$-cyclic groups for $3\leq n\leq 10$. Reddy and Sharma~\cite{11cyclic,sharma2025groups} classified all $n$-cyclic groups for $n=11$ and $12$. Now, the natural question is: how is this counting helpful for understanding the structure of the underlying group? Motivated by the work of Das and Mandal~\cite{das2024solvability}, and by the broader theme of characterizing group-theoretic properties through numerical invariants, in this paper, we give new characterizations of solvability and supersolvability in terms of cyclic subgroup counting. We prove the following results in this article.

\vspace{0.2cm}
\noindent \textbf{Theorem~A.} Let $G$ be a finite group with $c(G)< 50$. Then either $G$ is solvable or $G\cong A_5$ or $SL(2,5)$. (Theorem~\ref{SG1}~-~\ref{SG4})

\vspace{0.2cm}
\noindent \textbf{Theorem~B.} Let $G$ be a finite group with $ c(G)\leq 17$. Then either $G$ is supersolvable or $G\cong A_4, {\Z_2}^2\rtimes \Z_9, SL(2,3), S_4, \Z_q\times A_4$, ${\Z_2}^3\rtimes \Z_7$ or $(\mathbb{Z}_2\times \mathbb{Z}_2)\rtimes \mathbb{Z}_{27}$, where $q$ is a prime number. (Theorem~\ref{thm:SSG} and Lemma~\ref{lem:SSG})

\vspace{0.2cm}
The organization of the article is as follows: In Section~\ref{sec:notations}, we set up notations, recall some results, and formulate lemmas on which our proofs are based. In Section~\ref{sec:Solvability} and Section~\ref{sec:supersolvability}, we give criteria for solvability and supersolvability of a finite group based on its number of cyclic subgroups.

\section{Notations and Preliminaries}\label{sec:notations}
Throughout this paper, all groups are finite. The notations $\Z_n$, $D_{2n}$, $Q_{2^n}$, $SL(n,q)$ and $A_{n}$ denote the cyclic group of order $n$, dihedral group of order $2n$, generalized quaternion group of order $2^n$, special linear group of $n\times n$ matrices over the field $\F_q$ and alternating group of degree $n$ respectively. The projective special linear group of $n\times n$ matrices over the field $\F_{q}$ is denoted by $PSL(n,q)=L_n(q)$. The number of Sylow $p$-subgroups of $G$ is denoted by $n_p(G)$. If the group $G$ is fixed, then we simply use $n_p$ to denote the number of Sylow $p$-subgroups of $G$. Let $d(n), \pi(n)$ denote the number of positive divisors and the number of distinct prime divisors of $n$, respectively. For a group $G$, $C(G)$ is the collection of all cyclic subgroups of $G$, and $c(G)$ denotes the number of cyclic subgroups of $G$. A group $G$ is called {\em minimal simple group} if $G$ is simple and all its proper subgroups are solvable. In this paper, all the calculations for small groups are done by using \textsf{GAP}~\cite{GAP4} and the {\em SmallGroup$(n, i)$} denotes the $i^{th}$ group of order $n$ in the {\em Small Group Library} of \textsf{GAP}.
%\begin{lemma}[\cite{rose2009course}]\label{lem:2.1}
%\textcolor{red}{DELETE} Let $G$ be a finite group. Then
%\begin{enumerate}
    %\item $G$ is solvable if and only if $G$ has a solvable normal subgroup $H$ such that the quotient $G/H$ is also solvable.
%\item $G$ is supersolvable if and only if $G$ has a cyclic normal subgroup $H$ such that the quotient $G/H$ is supersolvable. 
%\end{enumerate}
%\end{lemma}
\begin{lemma}[\cite{miller1929number}]\label{lem:2.2}
 Let $G$ be a finite group. Then

$$|G|=\sum\limits_{m||G|}c(m)\varphi (m),$$
where $c(m)$ denotes the number of cyclic subgroups of order $m$ in $G$.   
\end{lemma}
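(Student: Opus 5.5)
The identity follows from partitioning $G$ according to which cyclic subgroup each element generates. Every element $g\in G$ generates exactly one cyclic subgroup, namely $\langle g\rangle$. So the map $g\mapsto \langle g\rangle$ from $G$ onto $C(G)$ is well defined and surjective. Its fibres partition $G$, and we get
$$|G|=\sum_{H\in C(G)} \bigl|\{g\in G : \langle g\rangle = H\}\bigr|.$$

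Next we compute the size of each fibre. If $H\in C(G)$ has order $m$, then $H\cong \Z_m$. An element $g$ satisfies $\langle g\rangle=H$ exactly when $g$ is a generator of $H$. The generators of $\Z_m$ are the residues $k$ with $\gcd(k,m)=1$, so there are $\varphi(m)$ of them. Hence the fibre over $H$ has size $\varphi(|H|)$, which depends only on the order of $H$.

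Finally we regroup the sum by orders. By Lagrange's theorem, every cyclic subgroup has order dividing $|G|$. Collecting the terms of the sum according to $m=|H|$, each divisor $m$ of $|G|$ contributes $c(m)\varphi(m)$. If $G$ has no cyclic subgroup of order $m$, then $c(m)=0$ and the term vanishes, so summing over all divisors $m$ of $|G|$ is harmless. This gives
$$|G|=\sum_{m\mid |G|}c(m)\varphi(m).$$

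There is no real obstacle here. The only point needing care is that the fibres of $g\mapsto\langle g\rangle$ are exactly the generator sets, so distinct cyclic subgroups have disjoint generator sets and every element is counted exactly once.
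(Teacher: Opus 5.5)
Your proof is correct: the fibres of $g\mapsto\langle g\rangle$ are exactly the generator sets of the cyclic subgroups, each of size $\varphi(m)$ over a subgroup of order $m$, and this is the standard argument for the identity. The paper gives no proof of its own and simply quotes the result from Miller's 1929 paper, so there is no other route to compare with.
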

\begin{lemma}[\cite{MR2963406}]\label{lem:2.3} If $G_1$ and $G_2$ are two groups with $gcd(|G_1|,|G_2|)=1$, then $$c(G_1\times G_2)=c(G_1)\times c(G_2).$$
    \end{lemma}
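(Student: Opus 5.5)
The plan is to set up a bijection between cyclic subgroups of $G_1\times G_2$ and pairs $(C_1,C_2)$ with $C_i$ a cyclic subgroup of $G_i$. The whole argument rests on one structural fact. Put $m=|G_1|$ and $n=|G_2|$ with $\gcd(m,n)=1$. Then every subgroup $H\le G_1\times G_2$ splits as $H=(H\cap G_1)\times(H\cap G_2)$, and moreover $H\cap G_1=\pi_1(H)$ and $H\cap G_2=\pi_2(H)$, where we identify $G_1$ with $G_1\times 1$ and $G_2$ with $1\times G_2$. From this the bijection $C\mapsto(\pi_1(C),\pi_2(C))$ will follow.

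\textbf{Step 1: the splitting.} I will prove it elementwise. Take $(a,b)\in H$. The order of $a$ divides $m$ and the order of $b$ divides $n$. Choose integers $u,v$ with $un+vm=1$. Then
$$(a,b)^{un}=(a^{un},b^{un})=(a^{1-vm},1)=(a,1),$$
so $(a,1)\in H$. By symmetry $(1,b)\in H$. Hence $\pi_1(H)\times\pi_2(H)\subseteq H$. The reverse inclusion $H\subseteq\pi_1(H)\times\pi_2(H)$ is trivial, so the two are equal.

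\textbf{Step 2: cyclicity in both directions.} Let $C=\langle(a,b)\rangle$ be cyclic. Then $\pi_1(C)=\langle a\rangle$ and $\pi_2(C)=\langle b\rangle$, so both projections are cyclic. Conversely, let $C_1=\langle a\rangle\le G_1$ and $C_2=\langle b\rangle\le G_2$ be cyclic. Then $C_1\times C_2$ is a direct product of cyclic groups of coprime orders, so it is cyclic, generated by $(a,b)$. Concretely, $(a,b)^k=1$ forces $o(a)\mid k$ and $o(b)\mid k$, so $o((a,b))=o(a)o(b)$.

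\textbf{Step 3: the bijection.} Define $\Phi\colon C(G_1\times G_2)\to C(G_1)\times C(G_2)$ by $\Phi(C)=(\pi_1(C),\pi_2(C))$. This is well defined by Step 2. It is injective because Step 1 gives $C=\pi_1(C)\times\pi_2(C)$. It is surjective because $(C_1,C_2)$ is the image of $C_1\times C_2$, which is cyclic by Step 2. Counting both sides then gives $c(G_1\times G_2)=c(G_1)\,c(G_2)$.

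The only step that needs real care is Step 1, where the coprimality is used via Bézout. The remaining steps are routine.
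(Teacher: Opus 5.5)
Your proof is correct and complete. The B\'ezout step in Step~1 is valid, the order computation $o((a,b))=\mathrm{lcm}(o(a),o(b))=o(a)o(b)$ in Step~2 is right, and $\Phi$ is a bijection. The paper gives no proof of this lemma; it only quotes it from the cited reference. So there is no argument in the paper to compare against, and yours serves as a self-contained justification.

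Two small remarks. First, Step~1 is more than you need. For a cyclic $C=\langle(a,b)\rangle$, the order count in Step~2 already shows that $C\le\langle a\rangle\times\langle b\rangle$ with equal orders. Hence $C=\pi_1(C)\times\pi_2(C)$, which is all that injectivity requires.

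Second, there is an even shorter route that counts generators, as in Lemma~\ref{lem:2.2}. One has $c(G)=\sum_{g\in G}1/\varphi(o(g))$. Since $o((a,b))=o(a)o(b)$ with coprime factors, the multiplicativity of $\varphi$ factors the sum over $G_1\times G_2$ as $c(G_1)\,c(G_2)$. Your bijective version has the advantage of describing the cyclic subgroups of $G_1\times G_2$ explicitly.
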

    %\begin{lemma}[\cite{dummit1991abstract}]\label{lem:2.4}
    %\textcolor{red}{DELETE} If $G$ is a finite group and $H$ is a subgroup of $G$, then the number of distinct conjugates of $H$ is $[G:N_G(H)]$.
%\end{lemma}
\begin{lemma}\label{lem:2.5}
    If $G$ is a finite group and $H$ is a normal subgroup of $G$, then
    \begin{enumerate}
        \item $c(G/H)\leq c(G)$
        \item $c(G)\geq c(G/H)+c(H)-1$
    \end{enumerate}
\end{lemma}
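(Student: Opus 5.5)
Both parts come from comparing cyclic subgroups of $G/H$ with cyclic subgroups of $G$. The key observation is that every cyclic subgroup of $G/H$ is the image of a cyclic subgroup of $G$. Indeed, if $\langle gH\rangle$ is a cyclic subgroup of $G/H$, then $\langle g\rangle H/H=\langle gH\rangle$, so $\langle g\rangle$ maps onto it under the natural projection $\pi\colon G\to G/H$.

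For part (1), consider the map $\Phi\colon C(G)\to C(G/H)$ given by $\langle g\rangle\mapsto \langle g\rangle H/H=\langle gH\rangle$. This is well defined, since the image of a cyclic group under a homomorphism is cyclic and does not depend on the chosen generator. By the observation above it is surjective, so $c(G/H)=|C(G/H)|\le |C(G)|=c(G)$.

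For part (2), I sharpen this count by tracking the fibre over the trivial subgroup. The subgroups $K\in C(G)$ with $\Phi(K)=\{H\}$, the identity of $G/H$, are exactly the cyclic subgroups of $G$ contained in $H$, that is, the elements of $C(H)$. There are $c(H)$ of them, and $\Phi^{-1}(\{H\})=C(H)$.

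Surjectivity gives at least one preimage for each of the remaining $c(G/H)-1$ nontrivial cyclic subgroups of $G/H$. These preimages are disjoint from $C(H)$, because their images are nontrivial. Therefore
$$c(G)=|C(G)|\ \ge\ |C(H)|+\bigl(c(G/H)-1\bigr)=c(H)+c(G/H)-1.$$

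The only step that needs care is making sure the fibre over the identity is counted exactly and does not overlap the other fibres. This holds because the fibres of a map are pairwise disjoint.
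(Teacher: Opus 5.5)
Your proof is correct and follows the paper's own argument: the surjective map $C(G)\to C(G/H)$, $\langle g\rangle\mapsto\langle gH\rangle$, gives (1), and (2) follows by observing that the fibre over the trivial subgroup is exactly $C(H)$ while each of the other $c(G/H)-1$ fibres is nonempty. Your explicit identification of that fibre and your use of the disjointness of fibres state precisely the step the paper only sketches.
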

\begin{proof}
    Consider the map
\[
\phi : C(G) \longrightarrow C(G/H)
\]
defined by
\[
\phi(\langle g \rangle) = \langle gH \rangle
\quad \text{for each } g \in G,
\]
where \(C(G)\) denotes the collection of all cyclic subgroups of \(G\). Now it is easy to see that the map $\phi$ is surjective. Therefore $c(G/H)\leq c(G)$.\\
For every element $g\in G$, there exists a cyclic subgroup in $G/H$ generated by $gH$. Also, if $g\in H$, then the subgroup of $G/H$ generated by $gH$ is trivial. This gives $c(G)\geq c(G/H)+c(H)-1$.
\end{proof}
\begin{lemma}\label{lem:SSG}(\cite{ashrafi2019n,zhou2016finite,kalra2019finite,11cyclic,sharma2025groups})
    Let $G$ be a finite group with $c(G)\leq 12$. Then either $G$ is supersolvable or $G\cong A_4$ or ${\Z_2}^2\rtimes \Z_9=${\em SmallGroup$(36,3)$}.
\end{lemma}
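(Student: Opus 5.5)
This lemma is a compilation of the known classifications of $n$-cyclic groups for $n\le 12$, so the plan is to read off, for each $n$, the list of $n$-cyclic groups and check supersolvability case by case rather than to argue structurally from scratch. For $n=1$ and $n=2$ the statement is immediate: $c(G)=1$ forces $G=1$, and $c(G)=2$ forces $G\cong\Z_p$. For $3\le n\le 10$ we invoke the lists of Zhou~\cite{zhou2016finite}, Kalra~\cite{kalra2019finite} and Ashrafi--Haghi~\cite{ashrafi2019n}. For $n=11,12$ we invoke those of Reddy and Sharma~\cite{11cyclic,sharma2025groups}. All of these lists consist of explicit families: cyclic groups $\Z_m$ with $d(m)=n$ (by Richards' bound), small abelian groups, $D_{2m}$, $Q_{2^k}$, and semidirect products $\Z_p\rtimes\Z_{q^k}$ or $\Z_m\rtimes\Z_{2^k}$. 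To these are added finitely many exceptional small groups.

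The bulk of the verification is then routine. Abelian groups are supersolvable. Dihedral, generalized quaternion and metacyclic groups $\Z_m\rtimes\Z_k$ are supersolvable, because the normal cyclic subgroup $\Z_m$ has cyclic quotient $\Z_k$. Refining $1\le \Z_m$ through its characteristic subgroups, and then $\Z_k$ through its subgroups, gives a normal series with cyclic factors. Direct products with coprime cyclic factors, which appear via Lemma~\ref{lem:2.3}, remain supersolvable. The exceptional groups of small order that occur, such as $\Z_2\times D_8$, $\Z_2^2\times\Z_4$, $\Z_3\rtimes\Z_8$ and the like, are checked in \textsf{GAP} with \texttt{IsSupersolvable}.

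The only groups that fail the check should be those containing a non-cyclic minimal normal subgroup, namely $\Z_2^2$ acted on irreducibly by an element of order $3$. Among groups with at most $12$ cyclic subgroups these are $A_4$ (with $c(A_4)=1+3+4=8$) and SmallGroup$(36,3)=\Z_2^2\rtimes\Z_9$ (with $c=1+3+4+4=12$). Here $\Z_9$ acts through its quotient $\Z_3$, the Sylow $3$-subgroups are cyclic of order $9$, and there are $n_3=4$ of them. Neither group is supersolvable, since $\Z_2^2$ is a chief factor that is not cyclic. Note that $SL(2,3)$ and $S_4$ have more than $12$ cyclic subgroups ($13$ and $17$ respectively), which is consistent with Theorem~B.

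The main obstacle is not conceptual but bookkeeping. We must make sure that every group in the five cited classifications has been accounted for, including infinite families parametrized by primes such as $\Z_p\rtimes\Z_{q^k}$. Each such family should be checked by the general metacyclic argument, not by computer.

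If a self-contained argument were preferred over citation, the fallback would be a structural one. Take a minimal normal subgroup $N$ of $G$; by Lemma~\ref{lem:2.5}, $c(N)+c(G/N)-1\le 12$, which lets us induct on $|G|$. For a non-cyclic elementary abelian $N\cong\Z_p^r$ one has $c(N)=1+(p^r-1)/(p-1)$. This is at most $12$ only for $(p,r)\in\{(2,2),(2,3),(3,2)\}$, so the case analysis is finite and again reduces to the two exceptions above.
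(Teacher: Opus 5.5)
Your main route is the paper's own: the paper gives no argument for this lemma beyond citing the classifications of $n$-cyclic groups for $n\le 12$. Checking supersolvability family by family on those lists (abelian groups, $p$-groups, metacyclic groups, plus finitely many small exceptions) is exactly what that citation amounts to.

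Two side remarks need correcting, although neither affects the citation-based argument.

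\textbf{The count for SmallGroup$(36,3)$.} Your breakdown $1+3+4+4$ is off, even though the total $12$ is right. Since $\Z_9$ acts through $\Z_3$, its subgroup of order $3$ is central, and hence it is the unique subgroup of order $3$. Multiplying its generator by each of the three involutions gives three cyclic subgroups of order $6$. The correct count is $1+3+1+3+4$ for orders $1,2,3,6,9$. This checks against Lemma~\ref{lem:2.2}, since $1+3+2+6+24=36$.

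\textbf{The fallback argument.} First, $c(\Z_p^2)=p+2\le 12$ also allows $p=5$ and $p=7$. The admissible non-cyclic elementary abelian $N$ are therefore $\Z_2^2,\Z_3^2,\Z_5^2,\Z_7^2,\Z_2^3$, not just three of them. Second, the sketch tacitly assumes $N$ is abelian, i.e.\ that $G$ is solvable; a non-abelian minimal normal subgroup is never ruled out. Third, it does not carry out the extension analysis. For example, $N\cong\Z_p$ with $G/N\cong A_4$ has to be treated, and this is precisely how SmallGroup$(36,3)$ (with central $\Z_3$) and $SL(2,3)$ arise.

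So the fallback is an outline rather than a proof. The cited classifications are what actually carry the lemma, as in the paper.
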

\begin{lemma}\label{ino:even}
    The number of involutions in $PSL(2,q)$, where $q=2^m$ is $q^2-1$.
\end{lemma}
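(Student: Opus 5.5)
The plan is to count the elements of order $2$ in $PSL(2,q)$, $q=2^m$, directly from the structure of the Sylow $2$-subgroups. Since $q$ is even, $\gcd(2,q-1)=1$. Hence $PSL(2,q)=SL(2,q)$, and also $SL(2,q)\cong PGL(2,q)$. So I will work with $G=SL(2,q)$, which has order $q(q^2-1)$.

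First, I identify the Sylow $2$-subgroups. Take the upper unitriangular subgroup
\[
U=\left\{\begin{pmatrix}1&b\\0&1\end{pmatrix} : b\in\F_q\right\}\cong(\F_q,+).
\]
It has order $q$, so it is a Sylow $2$-subgroup of $G$. Because the characteristic is $2$, it is elementary abelian, and every nonidentity element of $U$ is an involution. Thus $U$ contains exactly $q-1$ involutions.

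Second, I show that distinct Sylow $2$-subgroups intersect trivially. The normalizer $N_G(U)$ is the Borel subgroup $B$ of upper triangular matrices of determinant $1$, of order $q(q-1)$. Therefore
\[
n_2=[G:B]=q+1.
\]
This matches the action of $G$ on the $q+1$ points of the projective line: each conjugate of $U$ is the unipotent radical of the stabilizer of one point. A nonidentity element of $U$ fixes exactly one point of the projective line, namely $\infty$, since $x\mapsto x+b$ with $b\ne 0$ has no finite fixed point. Hence it lies in no other conjugate of $U$, and distinct Sylow $2$-subgroups meet trivially.

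Third, I account for all involutions. Every involution lies in some Sylow $2$-subgroup by Sylow's theorem. By the trivial-intersection property, the sets of involutions in distinct Sylow $2$-subgroups are disjoint. Counting therefore gives
\[
(q+1)(q-1)=q^2-1
\]
involutions.

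The only step needing genuine care is the trivial-intersection claim. The fixed-point argument on the projective line handles it cleanly. Alternatively, one can show that an involution $g\neq 1$ satisfies $(g-1)^2=0$ with $g-1\ne 0$, so $g-1$ has a one-dimensional kernel. Then $g$ fixes a unique line and lies in the unipotent radical of that line's stabilizer. This also gives a direct bijection: involutions correspond to pairs (line, nonzero element of the corresponding root group), and there are $(q+1)(q-1)$ such pairs.
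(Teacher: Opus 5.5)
Your argument is correct, but it takes a different route from the paper.

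The paper works with conjugacy classes. From $(A-I)^2=0$ it deduces that every involution is conjugate to $\begin{pmatrix}1&1\\0&1\end{pmatrix}$. It then computes that the centralizer of this matrix in $SL(2,q)$ is exactly the unitriangular group $U$ of order $q$, and reads off the class size $q(q^2-1)/q=q^2-1$.

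You count through the Sylow $2$-subgroups instead. Each is a conjugate of the elementary abelian group $U$ and contributes $q-1$ involutions. There are $[G:N_G(U)]=q+1$ of them, and they intersect pairwise trivially, so Sylow's theorem gives exactly $(q+1)(q-1)$.

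Your route needs no statement about conjugacy at all. The paper's Jordan-form step only yields conjugacy in $GL(2,q)$. Passing to a single $SL(2,q)$-class silently uses $GL(2,q)=SL(2,q)\times Z(GL(2,q))$, which holds because squaring is a bijection on $\F_q^{*}$ for $q$ even. The paper's route is shorter and identifies the centralizer of an involution. Yours additionally gives the Sylow structure: $n_2=q+1$ and trivial intersections.

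The one claim you state without proof is $N_G(U)=B$. It follows from your own fixed-point observation: if $g$ normalizes $U$, the nonidentity elements of $gUg^{-1}=U$ fix only $g\cdot\infty$, so $g\cdot\infty=\infty$ and $g\in B$.

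Your alternative bijection starts from the same identity $(g-1)^2=0$ as the paper. It then replaces the conjugacy/centralizer computation by sorting involutions according to their $q+1$ fixed lines, which is arguably the cleanest of the three arguments.
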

\begin{proof}
   Since $q=2^m$ is even, the field $\F_q$ has characteristic $2$. Also $-1=1$ in $\F_q$. Therefore, $Z(SL(2,q))=\{I\}$ and $PSL(2,q)\cong SL(2,q)$. Thus, to count involutions of $PSL(2,q)$, it is sufficient to count the number of involutions of $SL(2,q)$. Let $A\in SL(2,q)$ be an involution. Then $A^2=I$ and $A\not =I$. Since characteristic of $\F_q$ is $2$, then $x^2-1=(x-1)^2$ and the minimal polynomial of $A$ divides $(x-1)^2$. This implies that all the eigenvalues of $A$ are $1$ and $A$ is unipotent. Also, as $A\not =I$, the matrix $A-I$ is non-zero and nilpotent, so it is conjugate to the matrix $\begin{pmatrix}
0 & 1\\
0 & 0
\end{pmatrix}$ and $A$ is conjugate to $$I+\begin{pmatrix}
0 & 1\\
0 & 0
\end{pmatrix}=\begin{pmatrix}
1 & 1\\
0 & 1
\end{pmatrix}.$$ Therefore, every non-identity involution is conjugate to $B=\begin{pmatrix}
1 & 1\\
0 & 1
\end{pmatrix}$ and all the involutions form a single conjugacy class. We now compute the centralizer of $B$ in $SL(2,q)$. $$\text{Let}~ X=\begin{pmatrix}
a & b\\
c & d
\end{pmatrix} \in SL(2,q)$$ which satisfies $XB=BX$. Then $$XB=\begin{pmatrix}
a & a+b\\
c & c+d
\end{pmatrix} \text{and}~ BX=\begin{pmatrix}
a+c & b+d\\
c & d
\end{pmatrix}.$$ Comparing the terms both sides, we get $c=0$ and $a=d$ and $X=\begin{pmatrix}
a & b\\
0 & a
\end{pmatrix}.$ The determinant of $X$ is $1$, so $a^2=1$. This implies that $a=1$ as the $char(\F_q)=2$. Therefore $$C_{SL(2,q)}(B)=\bigg\{\begin{pmatrix}
1 & b\\
0 & 1
\end{pmatrix}, \text{where}~b\in \F_q\bigg\},$$ and $|C_{SL(2,q)}(B)|=q$. Moreover, $|SL(2,q)|=q(q^2-1)$, so the size of conjugacy class of $B$ is $$\frac{|SL(2,q)|}{|C_{SL(2,q)}(B)|}=\frac{q(q^2-1)}{q}.$$ Since all the non-identity involutions are conjugate to $B$. Hence $PSL(2,q)$, where $q=2^m$ contains $q^2-1$ involutions.
\end{proof}
\begin{lemma}(Lemma 2.9,~\cite{das2026group})\label{inv:odd}
The number of involutions in $PSL(2,q)$ for an odd prime power $q$ is 
$$\frac{q(q+1)}{2} \text{ if } q \equiv 1 \pmod{4}, \quad \frac{q(q-1)}{2} \text{ if } q \equiv 3 \pmod{4}.$$
    \end{lemma}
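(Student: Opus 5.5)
We count involutions in $PSL(2,q)=SL(2,q)/\{\pm I\}$ for odd $q$. Write $\bar A$ for the image of $A\in SL(2,q)$. Then $\bar A$ is an involution exactly when $A^2=\pm I$ and $A\neq \pm I$.

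\textbf{Step 1: reduce to trace-zero matrices.} The case $A^2=I$ gives nothing new. Since $q$ is odd, $x^2-1$ has distinct roots $\pm1$, so $A$ is diagonalizable with eigenvalues in $\{\pm1\}$. Because $\det A=1$, both eigenvalues are equal, forcing $A=\pm I$. Hence the preimages of involutions are precisely the matrices with $A^2=-I$. By Cayley--Hamilton, $A^2-\operatorname{tr}(A)A+I=0$, so $A^2=-I$ if and only if $\operatorname{tr}(A)A=0$, i.e. $\operatorname{tr}A=0$. Each involution $\bar A$ has exactly two preimages $\pm A$, both of trace zero. Therefore
$$\#\{\text{involutions in } PSL(2,q)\}=\tfrac12\,\#\{A\in SL(2,q):\operatorname{tr}A=0\}.$$

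\textbf{Step 2: count trace-zero matrices.} Write $A=\begin{pmatrix} a&b\\ c&-a\end{pmatrix}$. The determinant condition is $-a^2-bc=1$, that is, $bc=-(1+a^2)$. We count the solutions $(a,b,c)\in\F_q^3$ by splitting on $a$.
\begin{itemize}
\item If $1+a^2\neq 0$, then $b$ may be any nonzero element and $c$ is then determined. This gives $q-1$ solutions.
\item If $1+a^2=0$, then $bc=0$, which has $2q-1$ solutions.
\end{itemize}
The number of $a$ with $a^2=-1$ is $2$ if $q\equiv1\pmod 4$ and $0$ if $q\equiv 3\pmod 4$, since $-1$ is a square in $\F_q$ exactly when $4\mid q-1$.

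\textbf{Step 3: evaluate the cases.}
\begin{itemize}
\item For $q\equiv1\pmod4$ the total is $(q-2)(q-1)+2(2q-1)=q^2+q$. Halving gives $q(q+1)/2$.
\item For $q\equiv3\pmod4$ the total is $q(q-1)$. Halving gives $q(q-1)/2$.
\end{itemize}
These match the stated formulas.

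The only point needing care is Step 1: ruling out $A^2=I$ and confirming that the two preimages are counted exactly twice. Both follow because $q$ is odd, so $A\neq -A$. The remaining counting is routine.
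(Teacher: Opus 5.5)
Your proof is correct and complete. The reduction to $A^2=-I$ works because $x^2-1$ is separable for odd $q$ and $\det A=1$. Cayley--Hamilton then identifies these preimages with the trace-zero elements of $SL(2,q)$, and the map $\pm A\mapsto \bar A$ is exactly $2$-to-$1$. Your point count for $bc=-(1+a^2)$ is right, giving $q^2+q$ and $q^2-q$ before halving.

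The paper does not prove this statement; it only quotes it from \cite{das2026group}. The natural comparison is with the paper's own method for even $q$ in Lemma~\ref{ino:even}: show all involutions are conjugate, then divide the group order by the order of a centralizer.

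Carried over to odd $q$, that route needs two facts. First, the involutions of $PSL(2,q)$ form a single class. Second, the centralizer of one is dihedral of order $q-1$ or $q+1$ according as $q\equiv 1$ or $3 \pmod 4$. It also needs care, because the centralizer of $\bar A$ in $PSL(2,q)$ is the image of $\{B : BAB^{-1}=\pm A\}$, not merely of $C_{SL(2,q)}(A)$.

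Your lift-and-count argument avoids both the single-class claim and this centralizer subtlety. The dichotomy mod $4$ also appears transparently, as the question of whether $-1$ is a square in $\mathbb{F}_q$. The conjugacy-class route gives extra structural information in exchange: one class of involutions, with explicit centralizers. None of that is needed for the way the lemma is used in this paper.
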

\begin{lemma}(Theorem 1 and 2,~\cite{herzog1968finite})\label{lem:K_3} 
If $G$ is a finite simple group whose order is divisible by exactly three primes, then $G$ is isomorphic to one of the following $A_5,A_6,L_2(7),L_2(8),\\L_2(17),L_3(3),U_3(3),U_4(2)$.
        \end{lemma}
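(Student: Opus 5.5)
This lemma is the classification of simple $K_3$-groups, due to Herzog (Theorems 1 and 2 of \cite{herzog1968finite}). Within this paper it is quoted rather than proved. If I were to reconstruct it, I would first reduce the prime set of $|G|$ and then pin down the group from the structure of its Sylow $2$-subgroup.

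\textbf{Step 1: the primes.} Write $|G|=2^a3^b p^c$ with $p>3$.
\begin{itemize}
\item By Burnside's $p^aq^b$ theorem and Feit--Thompson, $|G|$ is even and divisible by exactly three primes.
\item Next I would show $3$ divides $|G|$. Otherwise $G$ is a Suzuki group by Thompson/Glauberman, and $|Sz(q)|$ has at least four prime divisors, which is impossible.
\end{itemize}

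\textbf{Step 2: bound $p$.} The key tool is a centralizer argument on the Sylow $p$-subgroup $P$. The aim is to show $C_G(P)$ is small and that $N_G(P)/C_G(P)$ embeds in $\mathrm{Aut}(P)$ with order a $\{2,3\}$-number. Counting the Sylow $p$-subgroups via $n_p\equiv 1\pmod p$ and $n_p \mid 2^a3^b$ then forces $p^c+\dots$ to equal a $\{2,3\}$-number. This leads to exponential Diophantine equations of the form $2^x3^y\pm1=p^c$. Herzog solves these, leaving $p\in\{5,7,13,17\}$.

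\textbf{Step 3: the Sylow $2$-subgroup.} I would analyse the Sylow $2$-subgroup with the available classification theorems:
\begin{itemize}
\item If it is dihedral, the Gorenstein--Walter theorem gives $L_2(q)$ or $A_7$. Only $q\in\{5,7,8,9,17\}$ yield exactly three primes, giving $A_5$, $L_2(7)$, $L_2(8)$, $A_6$ and $L_2(17)$. $A_7$ is excluded because its order has four prime divisors.
\item If it is semidihedral or wreathed, Alperin--Brauer--Gorenstein gives $L_3(3)$ and $U_3(3)$.
\item The remaining $2$-structure, with $p=5$, leads to $U_4(2)$ via its order $2^6\cdot3^4\cdot5$ and a centralizer-of-involution characterization.
\end{itemize}
In each case I would match the group order against the Lemma~\ref{inv:odd} and Lemma~\ref{ino:even} style involution counts.

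The main obstacle is Step 2 together with the ``other $2$-structure'' case of Step 3, which requires deep local analysis. For the purposes of this paper I would simply cite \cite{herzog1968finite}.
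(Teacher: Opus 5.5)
Citing \cite{herzog1968finite} is exactly what the paper does; it gives no proof of its own, so as a justification for this lemma your proposal is fine. The reconstruction you sketch, however, would not work as written, and parts of it are wrong rather than merely abbreviated.

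In Step~2, Sylow counting places no restriction on $p$. The number $n_p$ is a $\{2,3\}$-number congruent to $1$ modulo $p$, and such numbers exist for every prime $p>3$ (e.g.\ $2^{p-1}$). Likewise, equations $2^x3^y\pm1=p^c$ have many solutions ($p=11,19,23,37,47,73,\dots$), so ``solving'' them cannot leave only $\{5,7,13,17\}$. Substantially stronger input is needed. For instance, you would first have to prove that a Sylow $p$-subgroup has order $p$ and is self-centralizing; your sketch never addresses $c=1$. Then you would need character theory, such as Brauer's theory of blocks of defect one, where the principal-block degrees are $\{2,3\}$-numbers $\equiv\pm1 \pmod p$ satisfying a linear relation. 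Only that kind of argument gives purely $\{2,3\}$-Diophantine constraints with finitely many solutions.

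In Step~3, $L_2(8)$ does not come out of Gorenstein--Walter. Its Sylow $2$-subgroup is elementary abelian of order $8$, not dihedral, and Gorenstein--Walter only produces $L_2(q)$ with $q$ odd and $A_7$. So the abelian non-dihedral case (e.g.\ via Walter's theorem) is missing altogether. More seriously, ``the remaining $2$-structure'' is not a case. Nothing in your argument confines the Sylow $2$-subgroup of a simple $K_3$-group to a known list. The step that should produce $U_4(2)$ and exclude everything else is precisely the hard content, and it is left unargued. Finally, involution counts as in Lemma~\ref{ino:even} and Lemma~\ref{inv:odd} play no role in identifying these groups. Keep the citation, but do not present the sketch as a proof.
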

        \begin{lemma}(Theorem 2,~\cite{shi1991simple})\label{lem:K_4}
        If $G$ is a finite simple group whose order is divisible by exactly four primes, then $G$ is isomorphic to  one of the following 
        \begin{enumerate}
            \item $A_7,A_8,A_9,A_{10},M_{11},M_{12},J_{2},L_2(16),L_2(25),L_2(49),L_2(81),L_3(4),L_3(5),L_3(7),L_3(8),\\L_3(17),L_4(3),S_4(4),S_4(5),S_4(7),S_4(9),S_6(2),O_8^{+}2,G_2(3),U_3(4),U_3(5),U_3(7),U_3(8),\\U_3(9),U_4(3),U_5(2),Sz(8),Sz(32),^{3}D_4(2),^{2}{F_4(2)'}.$
            \item $L_2(r)$, where $r$ is a prime and satisfies $r^2-1=2^a\cdot 3^b\cdot v^c$, where $a,b,c\geq 1, v>3$ and $v$ is a prime.
            \item $L_2(2^m)$, where $m$ satisfies $2^m-1=u$ and $2^m+1=3t^b$ with $m\geq 2$, $u,t$ are
primes, $t>3$ and $b\geq 1$.
\item $L_2(3^m)$, where $m$ satisfies $3^m+1=4t$ and $3^m-1=2u^c$ or $3^m+1=4t^b$ and $3^m-1=2u$, with $m\geq 2$, $u,t$ are odd primes, $b,c\geq 1$.
        \end{enumerate}
            
        \end{lemma}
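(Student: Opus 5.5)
The statement is the classification of simple $K_4$-groups quoted from \cite{shi1991simple}; the paper uses it as a black box. If I had to prove it, I would do so by running through the Classification of Finite Simple Groups. I would treat each family and keep exactly those members whose order has four distinct prime divisors. Abelian simple groups $\Z_p$ and the sporadic groups are a finite check of the order tables. For the sporadics this leaves $M_{11},M_{12},J_2$, and the Tits group ${}^2F_4(2)'$ fits here too. For alternating groups, $\pi(|A_n|)$ is the number of primes $\le n$, which equals $4$ exactly for $7\le n\le 10$.

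For the groups of Lie type the main tool is Zsigmondy's theorem. If $q=p^f$, the order of a group of Lie rank $\ell$ contains the factor $p$ together with factors $q^{i}-1$ (or $q^i+1$, or cyclotomic values $\Phi_i(q)$) for several distinct $i$. Each $q^i-1$ with $(q,i)$ outside the standard exceptions $(2,6)$ and ($i=2$, $q+1$ a power of $2$) has a primitive prime divisor. Hence $\pi(|G|)\ge 1+\#\{\text{admissible } i\}$ minus a small number of exceptions. This bounds the rank for each family: rank at most $3$ classically, and only $G_2$, $Sz$, ${}^3D_4$, ${}^2F_4$ among the exceptional types. It then bounds $q$ in all families of rank $\ge 2$. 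For each surviving family, for instance $L_3(q)$ with order involving $p,\ q-1,\ q+1,\ q^2+q+1$, I would require the total number of primes to be four. I would solve the resulting exponential Diophantine conditions, such as $q^2+q+1$ being a prime power and $q^2-1$ having few prime factors, using Catalan/Mihăilescu and Nagell–Ljunggren type results. This should produce the finite list in part (1).

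The rank-one family $L_2(q)$, of order $q(q^2-1)/\gcd(2,q-1)$, cannot be reduced to a finite list and gives parts (2)–(4). I would split by characteristic. For $p\ge5$, the numbers $q-1,q,q+1$ include multiples of $2$ and $3$, and $p$ divides $q$, so one further prime $v$ is allowed. If $f\ge2$, then $q^2-1=(p^f-1)(p^f+1)$ gains extra primitive prime divisors. Only $q\in\{25,49\}$ survive, by a direct check. If $f=1$, the condition is exactly $r^2-1=2^a3^bv^c$, which is part (2). For $p=2$, the factors $2^m-1$ and $2^m+1$ are coprime and odd, with $3\mid 2^m+1$ for odd $m$. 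Three odd primes in total forces part (3) or a small case such as $L_2(16)$. For $p=3$, we have $\gcd(3^m-1,3^m+1)=2$ and $4\mid 3^m\pm1$, which gives the two alternatives in part (4), plus $L_2(81)$.

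The main obstacle is the exhaustive treatment of rank $\ge 2$ Lie-type groups in small characteristic and small $q$. There, Zsigmondy exceptions and coincidences among primes dividing different cyclotomic factors (e.g.\ $U_3(q)$ with $q^2-q+1$, or $S_4(q)$) must be handled individually. This also explains sporadic-looking members such as $L_3(17)$ and $U_3(9)$. I would first tabulate $\pi(|G|)$ for all such groups with $q\le 17$ by explicit factorisation. Then I would prove a uniform lower bound $\pi(|G|)\ge5$ beyond that range by exhibiting five distinct primitive prime divisors.
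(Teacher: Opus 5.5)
You are right that the paper gives no proof of this lemma: it is quoted from Shi and used as a black box, so your opening sentence matches the paper. Read as a proof, however, your roadmap has genuine gaps, mainly because Zsigmondy's theorem cannot do all the work you assign to it.

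\textbf{The Lie-type families.} Your bound ``rank at most $3$ classically'' is false. $O_8^+(2)=D_4(2)$ has rank $4$ and is on the list; it survives precisely because of the exception $(q,i)=(2,6)$, and its order is $2^{12}\cdot3^5\cdot5^2\cdot7$.

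More seriously, for $q=p$ prime the orders of $L_3(q)$, $U_3(q)$ and $S_4(q)$ involve only $\Phi_1(q)$, $\Phi_2(q)$ and one of $\Phi_3(q)$, $\Phi_6(q)$, $\Phi_4(q)$. So $p$ together with the Zsigmondy primes guarantees only four prime divisors. Neither ``Zsigmondy bounds $q$ in rank $\ge2$'' nor ``exhibit five distinct primitive prime divisors once $q>17$'' can work. What is needed is an arithmetic fact such as the following: for a prime $p$, $p^2-1$ is a $\{2,3\}$-number only for $p\in\{2,3,5,7,17\}$. Since $\frac{p-1}{2}$ and $\frac{p+1}{2}$ are consecutive integers, this reduces to $3^x-2^y=\pm1$. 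It is exactly why $L_3(17)$ appears and no $L_3(p)$ with larger prime $p$ does.

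The Suzuki family, which you mention only in passing, is also not reduced to a finite list by Zsigmondy. For $q=2^r$ it supplies only $2$, $5=\Phi_4(2)$, and primitive divisors of $2^r-1$ and $2^{4r}-1$. Indeed $Sz(32)$, of order $2^{10}\cdot5^2\cdot31\cdot41$, lies beyond any uniform cut-off at $q\le17$. Here you need $r$ prime and the Aurifeuillian factorization $q^2+1=(q+\sqrt{2q}+1)(q-\sqrt{2q}+1)$ into coprime factors to force $q\in\{8,32\}$.

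\textbf{Part (4).} For $L_2(3^m)$, the facts $\gcd(3^m-1,3^m+1)=2$ and $4\mid 3^m\pm1$ only yield, for odd $m$, that $3^m+1=4t^b$ and $3^m-1=2u^c$ with $b,c$ unrestricted. For even $m$ you also need Zsigmondy applied to $3^m-1=(3^{m/2}-1)(3^{m/2}+1)$ to isolate $L_2(81)$. Part (4) asserts more, namely that $b=1$ or $c=1$, and this needs a Diophantine argument you do not give. One way to supply it:
\begin{itemize}
\item Subtracting the two equations gives $u^c+1=2t^b$.
\item If $c\ge3$ is odd, then $\frac{u+1}{2}$ and $\frac{u^c+1}{u+1}$ are both nontrivial powers of $t$, so $t\mid u+1$. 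Lifting the exponent then gives $\frac{u^c+1}{u+1}=t^{v_t(c)}\le c$, which is absurd.
\item If $c$ is even, Ljunggren's theorem on $\frac{x^n-1}{x-1}=y^2$ forces $m=5$. Then $t^b=61$, so $b=1$.
\end{itemize}
Without some such step your sketch only establishes a weaker form of part (4).
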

\section{Results related to Solvability}\label{sec:Solvability}
In this section, we prove that if $1\leq c(G)\leq 49$ and $c(G)\not =32,49$, then $G$ is
solvable. We also classify the unique non-solvable groups with exactly $32$ or $49$ cyclic subgroups.
\begin{lemma}\label{lem:simple1}
    Let $G$ be a finite minimal simple group. Then $c(G)\geq 32$ and equality holds if and only if $G\cong PSL(2,4)$ or $A_5$.
\end{lemma}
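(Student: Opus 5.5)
Our approach is to invoke Thompson's classification of minimal simple groups: every minimal simple group is isomorphic to one of $L_2(2^p)$ with $p$ prime, $L_2(3^p)$ with $p$ an odd prime, $L_2(p)$ with $p>3$ prime and $p^2+1\equiv 0 \pmod 5$, $Sz(2^p)$ with $p$ an odd prime, or $L_3(3)$. We bound $c(G)$ from below in each family, using the involution counts of Lemma~\ref{ino:even} and Lemma~\ref{inv:odd}. The basic observation is that each involution generates its own cyclic subgroup of order $2$. Hence
\[
c(G)\ \geq\ 1+\#\{\text{involutions}\}+\#\{\text{other cyclic subgroups}\}.
\]

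\textbf{The case $A_5\cong PSL(2,4)$.} Here we compute directly from Lemma~\ref{lem:2.2}. By Lemma~\ref{ino:even}, $A_5$ has $15$ involutions. It also has $20$ elements of order $3$, giving $10$ subgroups, and $24$ elements of order $5$, giving $6$ subgroups. So $c(A_5)=1+15+10+6=32$.

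\textbf{The family $L_2(2^p)$ with $q=2^p\geq 8$.} Lemma~\ref{ino:even} already gives $q^2-1\geq 63$ involutions, so $c(G)>32$.

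\textbf{The family $L_2(3^p)$ with $p$ an odd prime.} Here $q\geq 27$ and $q\equiv 3\pmod 4$, so Lemma~\ref{inv:odd} gives $q(q-1)/2\geq 351$ involutions.

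\textbf{The family $L_2(p)$.} The condition $p^2+1\equiv 0\pmod 5$ forces $p\equiv \pm 2\pmod 5$, so $p\geq 7$. Note that $p=5$ does not occur, and $L_2(5)\cong A_5$ is in any case covered by the $A_5$ case above. For $p\geq 7$, Lemma~\ref{inv:odd} gives at least $p(p-1)/2\geq 21$ involutions. To exceed $32$ we add the Sylow $p$-subgroups: there are $p+1\geq 8$ of them, each cyclic of order $p$, contributing $p+1$ further cyclic subgroups. Adding the trivial subgroup, we get at least $1+21+8=30$. This falls just short, so for $p=7$ we count more carefully. $L_2(7)$ has $56$ elements of order $3$, giving $28$ subgroups, so $c\geq 1+21+28+8>32$. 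For $p\geq 11$, the involution count $p(p-1)/2\geq 55$ alone suffices.

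\textbf{The remaining cases $Sz(2^p)$ and $L_3(3)$.} For $Sz(q)$ with $q\geq 8$, the number of involutions is $(q^2+1)(q-1)\geq 455$. This follows from the standard fact that the Sylow $2$-subgroup has $q-1$ involutions, combined with the count of $q^2+1$ Sylow $2$-subgroups, which intersect trivially. For $L_3(3)$ there are $117$ involutions, one conjugacy class with centralizer $GL(2,3)$ of order $48$ in a group of order $5616$. Alternatively, $L_3(3)$ has at least $144$ Sylow $13$-subgroups.

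\textbf{Equality and the main obstacle.} In every case except $A_5$ the inequality is strict, so equality forces $G\cong A_5\cong PSL(2,4)$. The main obstacle is justifying the counts for the Suzuki groups and $L_3(3)$, since no lemma in the paper covers them. I expect to cite standard character-theoretic or conjugacy-class data for these, or to settle $L_3(3)$ by a \textsf{GAP} computation.
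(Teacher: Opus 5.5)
Your proof is correct and follows essentially the paper's route: Thompson's list of minimal simple groups, the involution counts of Lemmas~\ref{ino:even} and~\ref{inv:odd}, and a separate check of $L_2(7)$. The paper simply quotes $c(L_2(7))=79$; your lower bound $1+21+28+8=58$ is enough.

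The only divergence is for $Sz(2^p)$ and $L_3(3)$. There the paper avoids what you call the main obstacle by using Richards' bound $c(G)\ge d(|G|)$, which needs only the group orders:
\begin{itemize}
\item $d(|L_3(3)|)=d(2^4\cdot 3^3\cdot 13)=40$.
\item In $|Sz(2^p)|=2^{2p}(4^p+1)(2^p-1)$ the three factors are pairwise coprime, and $4^p+1$ is a proper multiple of $5$ because $p$ is odd. Hence $d\ge 7\cdot 3\cdot 2=42$.
\end{itemize}
Your involution counts for these two families are right, but they import more structure than this.

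Your aside that $L_3(3)$ has at least $144$ Sylow $13$-subgroups does not follow from Sylow's theorem alone, since $n_{13}=27$ is numerically allowed. It needs $|N_G(P_{13})|=39$. You do not need this aside anyway.
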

\begin{proof}
    By using the classification of finite simple groups, $G$ is isomorphic to one of the following.
    \begin{enumerate}
        \item $PSL(2,2^p)$, where $p$ is a prime.
        \item $PSL(2,3^p)$, where $p$ is an odd prime.
    \item $PSL(2,p)$, where $p>3$ is a prime such that $5\mid p^2+1$.
    \item The Suzuki group $Sz(2^p)$, where $p$ is an odd prime.
    \item $PSL(3,3)$.
    \end{enumerate}
    If $G\cong PSL(2,2^p)$, where $p$ is a prime, then except for $p=2$ the number of involutions in $G\geq 63$ using Lemma~\ref{ino:even}. For $p=2$, $G=PSL(2,4)$ or $A_5$ and $c(G)=32$. Again if $G=PSL(2,3^p)$, where $p$ is an odd prime or $G=PSL(2,p)$, where $p>3$ is a prime such that $5\mid p^2+1$, then also either the number of involutions in $G$ are more than $32$ using Lemma~\ref{inv:odd} or $G\cong PSL(2,7)$. By simple calculation, we can check that $c(PSL(2,7))=79$. If $G=Sz(2^p)$, where $p$ is an odd prime, or $G=PSL(3,3)$ then the $|G|$ is $2^{2p}(2^{2p}+1)(2^p-1)$ or $2^4\cdot 3^3\cdot 5$ respectively. By Richard's theorem~\cite{richards1984remark} $c(G)>32$. Hence, the result holds.
\end{proof}
\begin{lemma}\label{lem:simple2}
The following statements hold for finite simple groups
\begin{enumerate}
    \item Among all finite non-abelian simple groups, $A_5$ has the minimum number of cyclic subgroups.
\item There does not exist a finite simple group $G$ with $c(G)=49$.
   \end{enumerate}
    \end{lemma}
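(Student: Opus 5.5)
Both parts rest on a single reduction: every finite non-abelian simple group $G$ contains a minimal simple subgroup. This is standard. Among the non-solvable subgroups of $G$, take one of minimal order, call it $M$; all proper subgroups of $M$ are then solvable. Let $N$ be a maximal normal subgroup of $M$. Since $N$ is a proper subgroup it is solvable, so $M/N$ cannot be solvable, which forces $M/N$ to be a minimal simple group. By Lemma~\ref{lem:2.5}(1) and monotonicity of $c$ under subgroups (every cyclic subgroup of $M$ is one of $G$),
$$c(G)\ \ge\ c(M)\ \ge\ c(M/N)\ \ge\ 32,$$
where the last inequality is Lemma~\ref{lem:simple1}.

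\textbf{Part (1).} The display gives $c(G)\ge 32=c(A_5)$ at once, so $A_5$ attains the minimum. I would also record the equality case, since it is needed below. If $c(G)=32$, then every inequality in the chain is an equality. Lemma~\ref{lem:simple1} then forces $M/N\cong A_5$, and if $N\neq 1$ then Lemma~\ref{lem:2.5}(2) gives $c(M)\ge 32+c(N)-1>32$. Hence $M\cong A_5$, and $G$ has exactly as many cyclic subgroups as its subgroup $A_5$. That means every element of $G$ lies in this copy of $A_5$, so $G=A_5$.

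\textbf{Part (2).} Suppose $G$ is simple with $c(G)=49$. Then $G$ is non-abelian, since an abelian simple group is $\Z_p$ with $c=2$. The chain shows $G$ contains a subgroup $M$ with a quotient $M/N$ that is minimal simple and satisfies $c(M/N)\le 49$. First I would bound $|G|$ and $\pi(G)$. By Richards' theorem, $d(|G|)\le 49$. In addition, for each prime $p\mid |G|$ there are at least $n_p\ge p+1$ subgroups of order $p$, so
$$49\ \ge\ 1+\sum_{p\mid |G|}(p+1).$$
Since $2,3,5$ divide the order of any simple group, this allows very few extra primes: with the primes $2,3,5$ the sum is already $3+4+6=13$. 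Lemma~\ref{lem:K_3} and Lemma~\ref{lem:K_4} then leave a finite list, apart from the families $L_2(r)$, $L_2(2^m)$ and $L_2(3^m)$. Five or more prime divisors I would exclude using this sum together with Sylow counts.

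For each $L_2(q)$, Lemma~\ref{ino:even} and Lemma~\ref{inv:odd} show the number of involutions is at least $49$ once $q\ge 8$. The exception is $L_2(7)$, where the count is $21$, but $c(L_2(7))=79$ as computed earlier. Every non-abelian simple group of even order has a cyclic subgroup of order $2$ for each involution, so too many involutions already gives $c(G)>49$. The remaining cases are $A_5$, $L_2(9)\cong A_6$ and the sporadic-looking entries of the lists ($L_3(3)$, $U_3(3)$, $U_4(2)$, $A_7$, $M_{11}$, and so on). For these I would check that they have at least $49$ involutions, or else compute $c$ directly with \textsf{GAP}. For example, $A_6$ has $45$ involutions and also many elements of order $3$, so $c(A_6)>49$. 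Finally, $A_5$ has $c=32\ne 49$.

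\textbf{Main obstacle.} The hardest step is making the reduction to finitely many candidates fully rigorous without simply citing the classification lists. Specifically, I need a uniform bound showing that every simple group other than the small $L_2(q)$'s has more than $49$ involutions or cyclic subgroups. I would handle this by using $n_2\ge$ the number of involutions in a single Sylow $2$-subgroup, together with the prime-count inequality above, to cut the lists of Lemma~\ref{lem:K_3} and Lemma~\ref{lem:K_4} down to groups of order small enough to check directly.
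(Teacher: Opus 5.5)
\textbf{Part (1).} Your argument is correct, and it takes a genuinely different route from the paper. The paper argues from the order of $G$. Richards' bound forces $|G|$ either to have three prime divisors or to have the form $p^2qrs$. The paper then runs through Herzog's list of simple $K_3$-groups, and handles the remaining case with Walter's theorem on abelian Sylow $2$-subgroups plus an involution count.

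You instead pass to a minimal simple section $M/N$ and chain $c(G)\ge c(M)\ge c(M/N)\ge 32$ using Lemma~\ref{lem:2.5}(1) and Lemma~\ref{lem:simple1}. (You produce a section rather than a subgroup, but that is all the chain uses.) This is the same device the paper later uses in Theorem~\ref{SG2}. Your route needs only Thompson's list behind Lemma~\ref{lem:simple1}, works for every non-solvable group, and your equality analysis also shows that $A_5$ is the unique minimiser.

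\textbf{Part (2).} Here there is a genuine gap.

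First, the assertion that $2,3,5$ divide the order of every simple group is false. $L_2(7)$, $L_2(8)$, $L_2(17)$, $L_3(3)$ and $U_3(3)$ have no factor $5$, and the Suzuki groups have no factor $3$. In your setting this is repairable with your own chain: $c(M/N)\le 49$, and every minimal simple group other than $A_5$ has more than $49$ cyclic subgroups. Hence $M/N\cong A_5$ and $60\mid |G|$.

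The more serious problem remains even granting $60\mid|G|$: your inequality $49\ge 1+\sum_p(p+1)$ does not exclude five prime divisors. For the primes $2,3,5,7,11$ the right-hand side is only $34$. Richards' theorem, combined with the fact that a non-abelian simple group has non-cyclic Sylow $2$-subgroups, reduces the five-prime case exactly to $|G|=4qrst$. This case lies outside the lists of Lemma~\ref{lem:K_3} and Lemma~\ref{lem:K_4}, and it is precisely the case you leave to ``Sylow counts''.

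Sylow numbers alone do not settle it. Take $|G|=4\cdot 3\cdot 5\cdot 7\cdot 11$. The smallest values allowed by Sylow's theorem are $n_3=4$, $n_5=6$, $n_7=15$, $n_{11}=12$, and there are at least three involutions. Together with the trivial subgroup this gives only $1+3+4+6+15+12=41\le 49$ cyclic subgroups.

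The missing ingredient is what the paper uses at this point. The Sylow $2$-subgroup is $\Z_2\times\Z_2$, so Walter's classification of simple groups with abelian Sylow $2$-subgroups gives $G\cong L_2(q)$ with $q\equiv\pm 3\pmod 8$. Lemma~\ref{inv:odd} then gives far more than $49$ involutions. Alternatively, you could eliminate one at a time the finitely many orders your inequality permits, namely $60st$ with $7\le s<t$ and $s+t\le 33$. Each case needs normaliser and transfer arguments (a nontrivial $N_G(P)/C_G(P)$ forces cyclic subgroups of composite order) together with Lemma~\ref{lem:2.2}, not just Sylow numbers.

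Two smaller slips. The number of subgroups of order $p$ is not bounded below by $n_p$ in general: the dicyclic group of order $12$ has $n_2=3$ but only one involution. In a simple group the bound $p+1$ does hold, because that number is $\equiv 1\pmod p$ and exceeds $1$. Also, the involution count from Lemma~\ref{inv:odd} is $45<49$ at $q=9$, although you do treat $A_6$ separately.
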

    \begin{proof}
        Let $G$ be a finite simple group with the least number of cyclic subgroups. Then $c(G)\leq 32$ because $c(A_5)=32$. By Richard's theorem~\cite{richards1984remark} either $|G|$ has three prime divisors or $|G|=p^2qrs$, where $p,q,r$ and $s$ are distinct prime numbers. If $|G|$ has three prime divisors, then by Lemma~\ref{lem:K_3}, $G\in\{A_5,A_6,L_2(7),L_2(8),L_2(17),L_3(3),U_3(3),U_4(2)\}$. If $G\cong A_6$, then $c(G)=167$. If $G\cong L_2(7)$, then $|G|=168$ and $c(G)=79$. For $G\cong L_2(8)$ or $L_2(17)$, Lemma~\ref{ino:even} and~\ref{inv:odd} show that $G$ has more than $32$ involutions. If $G\cong L_3(3), U_3(3)$ or $U_4(2)$, then $|G|=2^4\cdot 3^3\cdot 13,~ 2^5\cdot 3^3\cdot 7$ and $2^6\cdot 3^4\cdot 5$ respectively. For these groups, using Richard's theorem~\cite{richards1984remark}, we can check that $c(G)>32$. If $G=p^2qrs$, then also by~\cite{walter1969characterization} and counting the involutions we get $c(G)>32$.

        Further assume that $G$ is a finite simple group, with $c(G)=49$. Then by Richard's theorem, either $|G|$ has three or four prime divisors or $|G|=p^2qrst$, where $p,q,r,s$ and $t$ are distinct prime numbers. If the order of $G$ has $3$ or $4$ prime divisors, then Lemma~\ref{lem:K_3} and~\ref{lem:K_4} give all such groups. For all such groups either by Richard's theorem~\cite{richards1984remark}, or by Lemma~\ref{ino:even}
        and~\ref{inv:odd}, we can see that $c(G)\not =49$. If $|G|=p^2qrst$, then also by using~\cite{walter1969characterization}, we can see that $c(G)\not =49$. Hence, the result follows.
    \end{proof}
\begin{theorem}\label{SG1}
    Let $G$ be a group such that $c(G)<32$, then $G$ is solvable.
\end{theorem}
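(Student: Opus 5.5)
We argue by contradiction, using a minimal counterexample. Suppose the statement fails, and let $G$ be a non-solvable group with $c(G)<32$ of smallest possible order.

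First we pass to proper subgroups. Every cyclic subgroup of a subgroup $H\le G$ is a cyclic subgroup of $G$, so $c(H)\le c(G)<32$. By minimality, every proper subgroup of $G$ is solvable.

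Next we show that $G$ is simple. Suppose $N$ is a proper nontrivial normal subgroup of $G$. Then $N$ is solvable, being a proper subgroup. By Lemma~\ref{lem:2.5}(1), $c(G/N)\le c(G)<32$. Since $|G/N|<|G|$, minimality gives that $G/N$ is solvable. Then $G$ is solvable, as an extension of the solvable group $N$ by the solvable group $G/N$, which is a contradiction. Hence $G$ has no proper nontrivial normal subgroups, so $G$ is simple.

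A simple group that is not solvable is non-abelian. Together with the first step, $G$ is therefore a non-abelian simple group all of whose proper subgroups are solvable, that is, a minimal simple group. Lemma~\ref{lem:simple1} then gives $c(G)\ge 32$, contradicting $c(G)<32$. Alternatively, Lemma~\ref{lem:simple2}(1) gives $c(G)\ge c(A_5)=32$ directly, once simplicity is known. Either way the counterexample cannot exist, and $G$ is solvable.

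There is no substantial obstacle once Lemma~\ref{lem:simple1} is available; all the real content (the classification of minimal simple groups and the involution counts) is in that lemma. The only points to state carefully are that $c$ is monotone under taking subgroups and quotients (the latter is Lemma~\ref{lem:2.5}(1)), and that a minimal counterexample must be non-abelian simple.
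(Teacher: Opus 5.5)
Your proof is correct and follows the paper's argument: take a minimal-order counterexample, use monotonicity of $c$ under subgroups and quotients (Lemma~\ref{lem:2.5}) to force $G$ to be simple, then apply a lower bound on $c$ for non-abelian simple groups. The only difference is that you primarily invoke Lemma~\ref{lem:simple1} for minimal simple groups, which actually uses the solvability of proper subgroups that the paper proves but never needs, whereas the paper cites Lemma~\ref{lem:simple2}(1) directly.
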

\begin{proof}
    Let $G$ be a minimum ordered non-solvable group such that $c(G)<32$. We first show that $G$ must be simple.
    
    It is easy to see that all proper subgroups of $G$ are solvable, because $G$ is the minimum ordered group satisfying the above condition. If $H$ is a proper normal subgroup of $G$, then $G/H$ must be non-solvable. Now, by the minimality of $|G|$, we have $c(G/H)\geq 32$, which is a contradiction by Lemma~\ref{lem:2.5}. Thus, $G$ has no proper normal subgroup, and $G$ is simple.
    Among all finite simple groups, $A_5$ has the least number of cyclic subgroups by Lemma~\ref{lem:simple2}.  Therefore, $c(G)\geq 32$. Hence, the result holds. 
\end{proof}
\begin{theorem}\label{SG2}
    Let $G$ be a non-solvable group such that $c(G)=32$, then $G\cong A_5$.
\end{theorem}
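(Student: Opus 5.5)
We argue along the lines of the proof of Theorem~\ref{SG1}, but we keep track of the case of equality. Let $G$ be non-solvable with $c(G)=32$. Our aim is to show that $G$ is simple. Then Lemma~\ref{lem:simple2} and Lemma~\ref{lem:simple1} will force $G\cong A_5$: a non-abelian simple group with exactly $32$ cyclic subgroups attains the minimum value of $c$, and the analysis in the proof of Lemma~\ref{lem:simple2} (Richards' bound, Lemma~\ref{lem:K_3} and the involution counts in Lemmas~\ref{ino:even} and~\ref{inv:odd}) leaves $A_5$ as the only candidate.

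\textbf{Step 1: $G$ has no non-solvable proper subgroup quotient.} Let $H$ be a proper nontrivial normal subgroup of $G$. At least one of $H$ and $G/H$ is non-solvable.
\begin{enumerate}
\item Suppose $G/H$ is non-solvable. By Theorem~\ref{SG1}, $c(G/H)\geq 32$. Lemma~\ref{lem:2.5}(2) then gives
\[
c(G)\geq c(G/H)+c(H)-1\geq 32+c(H)-1>32,
\]
since $c(H)\geq 2$ for $H\neq 1$. This is a contradiction.
\item Suppose $H$ is non-solvable. Every cyclic subgroup of $H$ is a cyclic subgroup of $G$, so $c(H)\leq c(G)=32$. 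Theorem~\ref{SG1} gives $c(H)\geq 32$, hence $c(H)=32$ and $C(H)=C(G)$. Every element of $G$ therefore lies in $H$, so $H=G$, contradicting properness.
\end{enumerate}
This comparison $c(H)\le c(G)$ also disposes of non-normal non-solvable subgroups. If $K<G$ is proper and non-solvable, then $32\leq c(K)\leq c(G)=32$, so $G=\bigcup C(K)=K$, a contradiction. Thus all proper subgroups of $G$ are solvable and $G$ has no proper nontrivial normal subgroup.

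\textbf{Step 2: identification.} By Step 1, $G$ is simple and non-abelian, and indeed minimal simple. Lemma~\ref{lem:simple1} then applies directly: a minimal simple group has $c(G)\geq 32$, with equality if and only if $G\cong PSL(2,4)\cong A_5$. Hence $G\cong A_5$. Conversely, $c(A_5)=32$ is the standard count $1+15+10+6$ (trivial subgroup, $15$ involutions, $10$ subgroups of order $3$, $6$ of order $5$).

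The main obstacle is Step 1(1). The inequality in Lemma~\ref{lem:2.5}(2) must be strict, and this needs $c(H)\geq 2$, which holds for any nontrivial $H$. The rest reduces to the cited lemmas. The only subtle point is to route through Lemma~\ref{lem:simple1} via minimal simplicity, rather than relying on the sketchier uniqueness claim inside Lemma~\ref{lem:simple2}.
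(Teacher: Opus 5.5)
Your proof is correct and is essentially the paper's argument: both use the strict inequalities $c(K)<c(G)$ for a proper subgroup $K$ and $c(G/N)<c(G)$ for $N\neq 1$ (via Lemma~\ref{lem:2.5}(2)) to force $G$ to be minimal simple, and then invoke Lemma~\ref{lem:simple1}. The one difference is where the bound $c\geq 32$ comes from. You take it from Theorem~\ref{SG1}, whose proof rests on Lemma~\ref{lem:simple2}(1). The paper instead applies Lemma~\ref{lem:simple1} directly to a minimal simple section $H/N$ of $G$, so that $32\leq c(H/N)\leq c(H)\leq c(G)$. It is therefore the paper's route, not yours, that actually avoids Lemma~\ref{lem:simple2}, which is what your closing remark intends.
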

\begin{proof}
    Since $G$ is non-solvable, then $G$ has subgroups $H$ and $N$ such that $N\trianglelefteq H$ and $H/N$ is a minimal simple group. If $c(H/N)\geq 33$, then $c(G)\geq 33$, which is a contradiction. Thus $c(H/N)=32$. If $H$ is a proper subgroup of $G$, then $c(G)\geq c(H)+1\geq c(H/N)+1=33$, which is a contradiction. Therefore $G=H$ and $c(G/N)=32$. If $N$ is a non-trivial subgroup of $G$, then $c(G)\geq c(G/N)+1=33$, which is also a contradiction. This implies that $N$ is trivial, and $G/N\cong G$ is a minimal simple group. Now, by Lemma~\ref{lem:simple1}, we get $G\cong A_5$.
\end{proof}
\begin{theorem}\label{SG3}
    Let $G$ be a group such that $33\leq c(G)\leq 48$, then $G$ is solvable.
\end{theorem}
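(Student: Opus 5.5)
We argue by contradiction, in the same style as Theorem~\ref{SG2}. Suppose $G$ is non-solvable with $33\leq c(G)\leq 48$. Then $G$ has a section $H/N$, with $N\trianglelefteq H\leq G$, such that $H/N$ is a minimal simple group. We then have the chain
\[
c(G)\geq c(H)\geq c(H/N),
\]
where the second inequality is Lemma~\ref{lem:2.5}(1). The first holds because $C(H)\subseteq C(G)$, and it is strict if $H<G$: a cyclic subgroup $\langle g\rangle$ with $g\notin H$ is not in $C(H)$. We will combine this with the extra inequality of Lemma~\ref{lem:2.5}(2),
\[
c(G)\geq c(G/N)+c(N)-1.
\]
By Lemma~\ref{lem:simple1}, $c(H/N)\geq 32$, with equality only for $H/N\cong A_5$. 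The first task is therefore to show that any minimal simple group other than $A_5$ has more than $48$ cyclic subgroups. The proof of Lemma~\ref{lem:simple1} already gives this. $PSL(2,2^p)$ with $p\geq 3$ has at least $63$ involutions, and $c(PSL(2,7))=79$. The remaining $PSL(2,3^p)$ and $PSL(2,p)$ have more than $48$ involutions by Lemma~\ref{inv:odd}; the smallest relevant cases are checked directly, e.g.\ $PSL(2,13)$ has $91$. For $Sz(2^p)$ and $PSL(3,3)$, Richards' bound $c(G)\geq d(|G|)$ already exceeds $48$. Hence $H/N\cong A_5$.

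Next we control the gap $c(G)-32\leq 16$. We split into three cases according to which containments in the chain are strict.

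\emph{Case $H=G$ and $N=1$.} Then $G\cong A_5$ and $c(G)=32$, a contradiction.

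\emph{Case $H=G$ and $N\neq 1$.} Then $G/N\cong A_5$ and $c(N)\leq 17$. Let $p$ be a prime dividing $|N|$. For $g\in N$ of order $p$, the $G$-conjugacy class of $\langle g\rangle$ has size $[G:N_G(\langle g\rangle)]$, and the images of $N_G(\langle g\rangle)$ in $A_5$ are proper subgroups unless $\langle g\rangle\trianglelefteq G$. A more efficient way to count is to take the preimages $M_i$ in $G$ of the cyclic subgroups $\langle x_i\rangle$ of $A_5$. Each $M_i$ contains cyclic subgroups not contained in $N$, and these can be distinguished in $G$. The aim is to show that $N$ contributes at least $17$ new cyclic subgroups unless $N$ is central of order $2$, i.e.\ $G\cong SL(2,5)$ or $\Z_2\times A_5$. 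Those two groups must be checked directly: $c(\Z_2\times A_5)>49$ and $c(SL(2,5))=49$, both outside the range $33$--$48$. For central $N$ of odd prime order $q$, Lemma~\ref{lem:2.3} gives $c(\Z_q\times A_5)=64$, which is too large. For non-central $N$, a minimal normal subgroup of $G$ inside $N$ is elementary abelian on which $A_5$ acts nontrivially, and hence has size at least $16$. It then contains many $G$-orbits of cyclic subgroups of order $p$ (e.g.\ $15$ for $2^4$), and elements of order $p$ times elements mapping to $3$- and $5$-elements add even more, pushing $c(G)$ past $48$.

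\emph{Case $H<G$.} We pick $H$ minimal so that $H/N\cong A_5$, and then bound the number of cyclic subgroups of $G$ outside $H$. Elements of $G\setminus H$ generate cyclic subgroups not in $H$, and these come in conjugacy orbits. If $c(H)=32$ then $H\cong A_5$, and $G$ is an overgroup of $A_5$ with at most $16$ further cyclic subgroups. For $G=S_5$ the transpositions alone give $10$, the $4$-cycles give $15$, and so on, so $c(S_5)>48$. In general we would use Lemma~\ref{lem:2.2}, together with the observation that if $A_5<G$ then $N_G(A_5)$ or the number of conjugates of $A_5$ forces many new cyclic subgroups. For instance, an element $x\notin H$ together with its $H$-conjugates gives $[H:C_H(x)\cap\ldots]$ subgroups, which is at least $5$ unless $x$ centralizes $H$; in the latter case $\langle x\rangle\times A_5$-type products again give $\geq 64$.

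The main obstacle will be the middle case: proving rigorously that a non-trivial normal $N$ with $G/N\cong A_5$ forces $c(G)\geq 49$, with equality only for $SL(2,5)$. We would first handle minimal normal $N$, splitting into central and non-central subcases, and then reduce general $N$ to that situation by passing to $G/N_0$ for a maximal $G$-normal $N_0<N$ together with Lemma~\ref{lem:2.5}(2). Where case analysis becomes unwieldy, we would fall back on \textsf{GAP} checks of small groups such as perfect groups of order $120\cdot k$ for small $k$.
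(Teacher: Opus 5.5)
Your route differs from the paper's, and as written two steps are not established. The paper takes a non-solvable counterexample of minimal order. Then every proper non-solvable subgroup or quotient has $c=32$ and is $A_5$, so only $A_5\trianglelefteq G$ or $G/H\cong A_5$ (with $H$ minimal normal) can occur. Simple $G$ is dismissed by the claim that no simple group has $33\le c\le 48$.

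You never reduce to simple groups, so your case $H<G$ must cover \emph{every} proper overgroup of $A_5$, including simple ones such as $A_6$ or $PSL(2,11)$. There the argument fails quantitatively. The $H$-orbit of $\langle x\rangle$ yields only $\ge 5$ cyclic subgroups outside $H$, i.e.\ $c(G)\ge 37$, whereas you need $17$.

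The missing idea is to conjugate $A_5$ itself rather than $\langle x\rangle$. Take $K\le G$ minimal with $A_5<K$, so $A_5$ is maximal in $K$.
\begin{itemize}
\item If $A_5\trianglelefteq K$, then $[K:A_5]=p$ and $C_K(A_5)\cap A_5=1$. This forces $K\cong S_5$ or $A_5\times\Z_p$, all with $c\ge 64$.
\item Otherwise there is a conjugate $A_5^g\ne A_5$. Since $A_5\cap A_5^g$ is proper in $A_5^g$, it has at most $c(A_4)=8$ cyclic subgroups. So $A_5^g$ contributes at least $24$ cyclic subgroups not in $A_5$, and $c(G)\ge c(K)\ge 56$.
\end{itemize}
With this step, your route needs only Thompson's list of minimal simple groups rather than a statement about all simple groups, which is a real advantage over the paper.

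The middle case is, as you acknowledge, not carried out, and the sketch contains a false step. If $N$ is non-central, a minimal normal subgroup of $G$ inside $N$ can still be central, so it need not carry a nontrivial $A_5$-action. Your reduction modulo a maximal $G$-normal $N_0<N$ repairs this (note $c(N)\le 17$ makes $N$ solvable by Theorem~\ref{SG1}). An actual count is still needed, and it can be done as follows.
\begin{itemize}
\item Take $N$ minimal normal and non-central. Since $N\le C_G(N)$ and $C_G(N)/N\trianglelefteq G/N\cong A_5$, we get $C_G(N)=N$. So $A_5$ acts faithfully on $N\cong\Z_p^n$, with $n\ge 2$ because $\mathrm{Aut}(\Z_p)$ is abelian.
\item Let $\langle\bar x\rangle\le A_5$ have prime order $r\ne p$, and choose $x$ of order $r$. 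The preimage $N\langle x\rangle$ has exactly $[N:C_N(x)]\ge p$ subgroups of order $r$, namely its Sylow $r$-subgroups, whose normalizer is $\langle x\rangle C_N(x)$.
\item For $p=2$ this gives $c(G)\ge c(N)+15+2\cdot 10+2\cdot 6\ge 51$.
\item For odd $p$, the $15$ involution subgroups of $A_5$ already lift to at least $45$ subgroups of order $2$. Since $c(N)\ge p+2$, this gives $c(G)\ge 50$.
\end{itemize}

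Two smaller slips:
\begin{itemize}
\item For $PSL(3,3)$ one has $d(2^4\cdot 3^3\cdot 13)=40$, so Richards' bound does not exceed $48$. Use its $117$ involutions instead.
\item Lemma~\ref{lem:2.3} does not apply to $\Z_3\times A_5$ or $\Z_5\times A_5$. Their values are $74$ and $82$, not $64$, though both are still outside the range.
\end{itemize}
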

\begin{proof}
Let $G$ be a non-solvable group of minimum order such that $33\leq c(G)\leq 48$. Then the claim is that $G$ is simple. Suppose $G$ is not simple, then $G$ has a proper non-trivial normal subgroup. Let $H$ be a proper non-trivial normal subgroup of $G$ of minimum order. Then either $H$ or $G/H$ is non-solvable. Now, we discuss these cases separately.

\textbf{Case~-~1:} If $H$ is non-solvable, then by the minimality of $|G|$, we have $c(H)<33$. By Theorems~\ref{SG1}, and~\ref{SG2}, $c(H)=32$ and $H\cong A_5$. Also, $G/H\cong G/A_5$ is a non-trivial group. Thus, $G/A_5$ has a subgroup $K/A_5$ of prime order, say $p$, such that $K$ is a non-solvable subgroup of $G$ and $|K|=60p$. If $p=2,3$ or $5$, then $|K|$ is $120, 180$ or $300$. Using \textsf{GAP}~\cite{GAP4}, we can verify that every non-solvable group of these orders has more than $48$ cyclic subgroups. Thus $p>5$, and the number of Sylow $p$-subgroups of $K$ is $n_p(K)=1+kp|60$. First assume that a Sylow $p$-subgroup of $K$ is not normal, then $p=7, 11, 19$ and $29$. If $p=19$ or $29$, then $c(G)>48$ as $c(H)=32$. Thus, $p$ is either $7$ or $11$ and $|K|=420$ or $660$. By \textsf{GAP}, we can check that these cases are not possible. Therefore, $K$ contains normal Sylow $p$-subgroup and $K= A_5\times \Z_p$. Since $(|A_5|,p)=1$, then by Lemma~\ref{lem:2.3} $c(K)=c(A_5)\times c(\Z_p)=64$, which is a contradiction.

\textbf{Case~-~2:} If $G/H$ is non-solvable, then by using similar arguments as the previous case, we get $c(G/H)=32$, $G/H\cong A_5$ and $|G|=60\times |H|$. If $|H|\in \{2,3,4,5\}$, then $|G|\in \{120,180,240,300\}$. It can be checked by using \textsf{GAP}~\cite{GAP4} that all non-solvable groups of these orders have at least $49$ cyclic subgroups. Therefore $|H|\not \in \{2,3,4,5\}$. Also, $H$ is the direct product of isomorphic simple groups by~\cite{hall2018theory}. If $H$ is the direct product of non-abelian simple groups, then $c(H)\geq 32$. This implies that $c(G)\geq c(G/H)+c(H)-1=63$, which is not possible. Therefore, $H$ is an elementary abelian $p$-group, that is $H=(\Z_p)^n$ and $c(H)=p^{n-1}+p^{n-2}+\dots+2$. Since $c(G)\geq c(G/H)+c(H)-1$ then $H\in \{(\Z_2)^4,(\Z_2)^3,(\Z_3)^3, (\Z_p)^2,\Z_q, \text{where}~ 3\leq p\leq 13,q\geq 7\}$. Now we discuss these cases separately.

\textbf{Subcase~-~1~}Let $H=(\Z_2)^4$, then $|G|=2^6\times 3\times 5$. Since $G/H=A_5$, the possible order of any element in $G/H$ is $1,2,3$ or $5$. Since $H$ is an elementary abelian $2$ group, so any non-trivial element of $H$ has order $2$. Let $xH\in G/H$ be any non-trivial element of $G/H$. Then $x^k\in H$, where $k=2,3$ or $5$. If $o(xH)=2$ in $G/H$, then $o(x)$ is either $2$ or $4$ in $G$. Thus if $xH\in G/H$ such that $x\not \in H$ then $o(x)\in \{2,3,4,5,6,10\}$. Also $c(H)=16$, then $c(G/H)+c(H)-1=47$. This implies that either $c(G)=47$ or $48$. If $c(G)=48$, then the maximum possible order for the $48$th cyclic subgroup of $G$, if it exists, is $30$. This is not possible by using Lemma~\ref{lem:2.2}.

    \textbf{Subcase~-~2~}If $H=(\Z_2)^3$, then $|G|=480$. By using \textsf{GAP}, we can check that no such non-solvable group of order $480$ has the number of cyclic subgroups lying in the range $33\leq c(G)\leq 48$. If $H=(\Z_3)^3$, then $|G|=2^2\times 3^4 \times 5$. Moreover, Sylow $3$-subgroup of $G$ is not normal, otherwise $G$ is solvable. By \textsf{GAP}, we can check that a Sylow $3$ subgroup, which contains $(\Z_3)^3$ as a subgroup, has at least $23$ cyclic subgroups. Also, the Sylow $3$-subgroup is not unique, so $G$ has at least $32$ cyclic subgroups of order $3^k$, where $0\leq k\leq 3$. Moreover, $G$ has at least $21$ cyclic subgroups corresponding to the subgroups of order $2$ and $5$ of $G/H$ whose orders are not equal to $3^k$, then $c(G)\geq 32+21>48$. Thus, this case is not possible.
    
    \textbf{Subcase~-~3~}If $H=\Z_p\times \Z_p$, where $3\leq p\leq 13$, then $|G|=60p^2$. First take $p=13$, in this case $|G|=2^2\times 3\times 5\times {13}^2$ and $G$ contains an abelian subgroup of order $5\times {13}^2$ say $M$ as $5$ does not divide ${13}^k-1$ for $k=1,2$. By \textsf{GAP}, we can check that $M$ has $13$ cyclic subgroups of order $65$. Thus $c(G)\geq 15+10+14+13$, where $15$ and $10$ denotes the cyclic subgroup of $G$ corresponding to cyclic subgroups of order $2$ and $3$ in $G/H$ respectively, $14$ denotes subgroups of order $13$ in $G$, and $13$ denotes subgroups of order $65$ in $G$. This implies that $c(G)>48$, which is a contradiction. Similarly, we can check that when $p=11$, then also $G$ contains a subgroup of order $3\times {11}^2$, which can not be unique otherwise $G$ will be solvable. This shows that $c(G)>48$. For $p=7$, also by following the similar steps, we can check that $c(G)>48$. If $p=5$, then $|G|=2^2\times 3\times 5^3$ and Sylow $5$-subgroup of $G$ can not be unique otherwise $G$, will be solvable. By Sylow's theorem, $G$ has at least $6$ subgroups of order $5^3$. Also, by using \textsf{GAP}, $G$ has at least $31$ cyclic subgroups of order $5^k$, where $k=1$ or $2$. Therefore, $c(G)\geq 15+10+31>48$, where $15$ and $10$ denotes the cyclic subgroups of $G$ corresponding to the subgroups of order $2$ and $3$ in $G/H$ respectively, and $31$ denotes the number of cyclic subgroups of order $5^k$ in $G$. For $p=3$, $|G|=540$, we can check by \textsf{GAP} that no such non-solvable group satisfies $33\leq c(G)\leq 48$.
    
    \textbf{Subcase~-~4~}If $H=\Z_p$, where $p\geq 7$, then $|G|=60p$. In this case, if $G$ has a subgroup of order $30$, then $G$ has a subgroup of order $30p$, which is a subgroup of the smallest prime index in $G$, hence normal in $G$ by~\cite{dummit1991abstract}. This implies that $G$ is solvable, which is a contradiction. Thus, $G$ does not contain a subgroup of order $30$. Moreover, $G$ has a normal subgroup of order $p$, so by Schur-Zassenhaus Theorem~\cite{hans}, $G$ has a subgroup of order $60$ isomorphic to $A_5$ or $\Z_5\times A_4$. If the subgroup of order $60$ in $G$ is normal, then $G=M\times \Z_p$, where $M=A_5$ or $\Z_5\times A_4$. In both cases, $c(G)=64$ and $32$ respectively, which is not possible. Thus, the subgroup of order $60$ is not normal in $G$, and any subgroup of order $60$ has $p$ conjugates in $G$. If $G$ has $\Z_5\times A_4$ as a subgroup of order $60$, then $G$ has $p$ subgroups isomorphic to $\Z_5\times A_4$. By taking different intersection possibilities, and by using the fact that $G/H=A_5$, we can check that $c(G)>48$. Now, we are left with the case when the subgroup of order $60$ in $G$ is $A_5$, which is not unique. Let $P$ and $Q$ be the two subgroups of $G$ isomorphic to $A_5$. Then, by taking all the possibilities for $P\cap Q$, we see that $c(G)>48$. 
Combining both cases, we see that $G$ has no non-trivial proper normal subgroup, that is, $G$ is simple. As there is no simple group $G$, with $33\leq c(G)\leq 48$. Hence, the result holds.
\end{proof}
\begin{theorem}\label{SG4}
    Let $G$ be a non-solvable group such that $c(G)=49$, then $G\cong SL(2,5)$. 
\end{theorem}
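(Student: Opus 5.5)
I will follow the same reduction as in Theorems~\ref{SG2} and~\ref{SG3}. The first step is to take a minimal counterexample: a non-solvable $G$ with $c(G)=49$ and $G\not\cong SL(2,5)$, and first check that $c(SL(2,5))=49$. For the latter, $SL(2,5)$ has a unique involution. Its elements of orders $3,4,5,6,10$ give $10,15,6,10,6$ cyclic subgroups respectively. Together with the trivial subgroup and $\langle -I\rangle$, this gives $1+1+10+15+6+10+6=49$.

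Next I will show that $G$ cannot be simple. By Lemma~\ref{lem:simple2}(2) there is no simple group with $c(G)=49$. Hence $G$ has a minimal normal subgroup $H$ with $1\neq H\neq G$. Every proper subgroup $K$ satisfies $c(K)\le 48$, and so does every proper quotient, by Lemma~\ref{lem:2.5}. By Theorems~\ref{SG1}--\ref{SG3}, any non-solvable proper subgroup or proper quotient is then isomorphic to $A_5$, since $c=32$ is the only admissible value below $49$.

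\textbf{Case 1: $H$ is non-solvable.} Then $H\cong A_5$, and as in Case~1 of Theorem~\ref{SG3} I pick $K/H\le G/H$ of prime order $p$. If $K<G$, then $c(K)\le 48$, and the analysis of Theorem~\ref{SG3} (GAP for $|K|\in\{120,180,300,420,660\}$, and $K\cong A_5\times\Z_p$ with $c(K)=64$ otherwise) already gives a contradiction. If $K=G$, then $|G|=60p$. The same list applies, except that one now checks by GAP that no non-solvable group of order $120,180,300,420,660$ other than $SL(2,5)$ has exactly $49$ cyclic subgroups. For instance $S_5$ and $A_5\times\Z_2$ have $c>49$. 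Since $H\cong A_5$ is normal, $G\not\cong SL(2,5)$, so every such case gives a contradiction. The remaining case $G=A_5\times\Z_p$ has $c(G)=64$.

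\textbf{Case 2: $G/H$ is non-solvable.} Then $G/H\cong A_5$, and $H$ is elementary abelian, since a product of non-abelian simple groups would force $c(G)\ge 63$. From $49\ge 32+c(H)-1$ we get $c(H)\le 18$. Thus $H\in\{(\Z_2)^4,(\Z_2)^3,(\Z_2)^2,\Z_2,(\Z_3)^3,(\Z_3)^2,\Z_3,(\Z_p)^2,\Z_q\}$ with small $p$ and $q$ prime. The orders $120,180,240,300,480,540$ and $60\cdot 27$ are settled by GAP: the only non-solvable group of order $120$ with $49$ cyclic subgroups is $SL(2,5)$, which corresponds to $H=\Z_2$, and the other orders give none. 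The cases $(\Z_p)^2$ with $5\le p\le 13$ and $\Z_q$ with $q\ge 7$ are handled exactly as in Subcases~3 and~4 of Theorem~\ref{SG3}. The counts obtained there exceed $49$ (Subcase~3), or give $c(G)\in\{32,64\}$ for direct products, or exceed $49$ via non-normal conjugates of the complement (Subcase~4). Subcase~1 with $H=(\Z_2)^4$ needs one more step. The bound $c(G)\ge 47$ leaves $c(G)=49$ possible, which I would exclude by Lemma~\ref{lem:2.2}: the element orders are restricted to $\{1,2,3,4,5,6,10\}$, and at most two extra cyclic subgroups cannot account for the order $960$. Alternatively I would check order $960$ directly by GAP.

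The main obstacle is the sharpened bookkeeping. The inequalities used in Theorem~\ref{SG3} produced the bound $c(G)>48$, and at several points (notably Subcase~4 with $|G|=60p$, and $H=(\Z_2)^4$) I must recheck that they actually give $c(G)>49$ or $c(G)\ne 49$. Where the earlier estimates land exactly at $49$, I would fall back on Lemma~\ref{lem:2.2} together with Sylow counting for the relevant prime.
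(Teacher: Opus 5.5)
Your argument is correct to the same standard of rigor as the paper's, but it is organized more economically. The paper proves three things in turn: that $G$ is perfect (Claim~1), that every proper normal subgroup is solvable (Claim~2), and that the minimal normal subgroup $N$ is unique (Claim~3, which needs a \textsf{GAP} search over perfect groups of order $3600$). Only then does it deduce that $N$ is characteristically simple, hence elementary abelian. You instead take a minimal normal subgroup $H$ from the start and split on whether it is solvable. That gives ``$H\cong A_5$'' or ``$H$ elementary abelian with $G/H\cong A_5$'' at once. Your Case~1 does the work of Claims~1--2, and neither perfectness nor Claim~3 is ever needed. Your Case~2 matches the paper's final enumeration: the same bound $c(H)\le 18$, the same list, the same \textsf{GAP} orders and the same Schur--Zassenhaus step. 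You also add the explicit check $c(SL(2,5))=49$. Your exclusion of $H=(\Z_2)^4$ is sharper than the paper's: every element order lies in $\{1,2,3,4,5,6,10\}$, so Lemma~\ref{lem:2.2} gives $960\le 4\,c(G)$ outright.

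The ``sharpened bookkeeping'' you flag comes from sketchy spots in the paper. With $H$ minimal normal you can replace that counting by structure.

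In Case~1 with $K=G$, conjugation embeds $G/C_G(H)$ in $\mathrm{Aut}(A_5)\cong S_5$, and $C_G(H)\cap H=Z(A_5)=1$. Hence $G\cong S_5$ or $G\cong A_5\times\Z_p$. This also covers $p=59$ (where $n_{59}=60$), which the list $\{7,11,19,29\}$ you inherit from Theorem~\ref{SG3} omits.

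In Case~2, $G$ acts on the abelian group $H$ through $G/H\cong A_5$.
\begin{itemize}
\item If the action is trivial, $H$ is central, so minimality gives $H\cong\Z_p$. Then $G$ is a central extension of $A_5$, namely $SL(2,5)$ (only if $p=2$) or $A_5\times\Z_p$. In particular, for $p\ge 7$ the Schur--Zassenhaus complement is $A_5$, never $\Z_5\times A_4$, and $c(G)=64$ with no intersection counting.
\item A nontrivial action needs $A_5\le GL(n,p)$. On your list this occurs only for $(\Z_2)^4$. Indeed $GL(2,2)\cong S_3$, and $5$ divides neither $|GL(3,2)|=168$ nor $|GL(3,3)|=11232$. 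A copy of $A_5$ in $GL(2,p)$ with $p$ odd would lie in $SL(2,p)$, whose only involution is $-I$.
\end{itemize}
So the $(\Z_p)^2$, $(\Z_2)^3$ and $(\Z_3)^3$ cases are vacuous, and almost all of the \textsf{GAP} checks can be dropped.
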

\begin{proof} From classification of finite simple groups and Lemma~\ref{lem:simple2}, it follows that $G$ is not simple.

\textbf{Claim 1:} $G$ is a perfect group. If $G$ is not perfect, then $\{e\}\subsetneq G'\subsetneq G$. Since $G/G'$ is abelian, then $G'$ is non-solvable. As $c(G')<c(G)$, by Theorem~\ref{SG1} and \ref{SG3} $c(G')=32$ and $G'=A_5$. Also, $G/G'=G/A_5$ is a non-trivial group. Let $K/A_5$ be a subgroup of $G/A_5$ of prime order $p$. If $p=2,3$ or $5$, then $K$ is a non-solvable group of order $120,180$ or $300$. By \textsf{GAP}~\cite{GAP4}, one can check that $K=SL(2,5)$ but $SL(2,5)$ has no subgroup isomorphic to $A_5$, so this is not possible. Therefore, $p>5$ and $|K|=60p$ and the number of Sylow $p$ subgroups of $K$, $n_p(K)=1+kp|60$. If the Sylow $p$-subgroup of $K$ is not normal, then $p=7,11,19$ and $29$. For $p=19$ and $29$, it is easy to check that $c(G)>49$. Moreover, for $p=7$ and $11$, it can be checked by \textsf{GAP}~\cite{GAP4} that there is no non-solvable group that satisfies $c(G)=49$. Thus, the Sylow $p$-subgroup $P$ of $K$ is normal and $K=A_5\times P$, also $c(K)=c(A_5)\times c(P)=32\times 2=64$, which is a contradiction. Therefore, $G$ is a perfect group. Since $G$ is not a simple group, $G$ has a proper non-trivial normal subgroup, say $N$.

\textbf{Claim 2:} The next claim is, $N$ is a solvable group. If $N$ is non-solvable, then by the Theorems~\ref{SG1} and~\ref{SG3}, we have $c(N)=32$ and $N= A_5\trianglelefteq G$. Let $K/N$ be a subgroup of prime order $p$ in $G/N$. Then $|K|=60p$. Now, same as the proof of Claim $1$, we can show that this case is not possible. Thus, $N$ is solvable.\\ Again, $G/N$ is non-solvable and $G/N=A_5$. If there exists a proper subgroup $M$ of $N$, which is normal in $G$, then by a similar argument, we get $G/M=A_5$. Thus $|M|=|N|$. Hence, $N$ is a minimal normal subgroup of $G$ and if $M\trianglelefteq G$, then $|M|=|N|$.

\textbf{Claim 3:} $N$ is the unique non-trivial proper normal subgroup of $G$. Suppose $M$ is another non-trivial proper normal subgroup of $G$. Then $|M|=|N|$. Moreover, $MN$ is a normal subgroup of $G$ with $|MN|>|N|$. Thus $MN=G$, and $$|MN|=\frac{|M||N|}{|M\cap N|}=\frac{|N|^2}{|M\cap N|}=|G|=60|N|, i.e. |M\cap N|=|N|/60.$$ Since $N$ is a minimal normal subgroup, we have $|M\cap N|=1$, $|N|=60$ and $|G|=3600$. Now, using \textsf{GAP}, one can check that among all perfect groups $G$ of order $3600$, none of them satisfies $c(G)=49$. Thus, claim $3$ holds.\\
Now, one can also observe that $N$ is characteristically simple because if not, let $\{e\}\lneq M\lneq N$ be a characteristic subgroup of $N$. Then $M$ is a normal subgroup of $G$ properly contained in $N$, which is not possible. Since $N$ is solvable and characteristically simple, $N=(\Z_p)^n$.\\
 If $n\geq 5$, then $c(N)\geq 32$ and $c(G)\geq c(G/N)+c(N)-1=63$, a contradiction. Thus $n\leq 4$. If $n=4$, then $c(N)=p^3+p^2+p+2$ and only prime satisfying $c(N)\leq 18$ is $p=2$. If $n=3$, then $c(N)=p^2+p+2$ and only prime satisfying $c(N)\leq 18$ are $p=2$ and $3$. If $n=2$, then $c(N)=p+2$ and only prime satisfying $c(N)\leq 18$ are $p=2,3,5,7,11,13$. By \textsf{GAP}~\cite{GAP4}, we can check that no non-solvable group of these orders contains $49$ cyclic subgroups. Thus, we must have $N\cong \Z_p$ and $|G|=60p$.
 If $p=2,3$ or $5$, then $|G|=120, 180$ or $300$. By \textsf{GAP}~\cite{GAP4}, one can check that $G\cong SL(2,5)$.\\
 Thus $p\geq 7$, also $N\trianglelefteq  G$ and $(|N|,|G|/|N|)=1$. By Schur–Zassenhaus theorem, $N$ has a complement in $G$, i.e., $G$ has a subgroup of order $60$. Next, we claim that $G$ has no subgroup of order $30$. Suppose $S$ is a subgroup of order $30$ in $G$. Then $G$ has a subgroup of order $30p$, which is normal and solvable, and its quotient is also solvable. This shows that $G$ is solvable, which is a contradiction. Thus, the subgroup of order $60$ in $G$ is either $A_5$ or $\Z_5\times A_4$, and any subgroup of order $60$ in $G$ is maximal. If the subgroup of order $60$ is normal in $G$, then $c(G)\not =49$. This implies that the number of conjugates of any $60$ order subgroup in $G$ is exactly $p$. If the subgroup of order $60$ is $A_5$, then by taking all the possibilities of intersection, we get $c(G)>49$. Similarly, we can check when the subgroup of order $60$ is $\Z_5\times A_4$, then also $c(G)>49$. After combining all these arguments and using \textsf{GAP}, we get $c(G)>49$. Hence $G\cong SL(2,5)$.
\end{proof}

\section{Results related to Supersolvability}\label{sec:supersolvability}
In the existing literature, all groups satisfying $1\leq c(G)\leq 12$ have been completely classified. Among these, the only non-supersolvable groups are $A_4$ and $ {\Z_2}^2\rtimes \Z_9=$ {\em SmallGroup}$(36,3)$. In this section, we extend this classification by determining all non-supersolvable groups satisfying $13\leq c(G)\leq 17$.
\begin{lemma}\label{lem:SSGa}
  Let $G$ be a non-supersolvable group of order $p^2qr$, where $p,q$ and $r$ are distinct prime numbers. Then $G\cong \Z_r\times A_4$.  
\end{lemma}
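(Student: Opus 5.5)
The plan is to locate the non-cyclic chief factor that makes $G$ fail to be supersolvable and then identify $G$ from it. First I would dispose of non-solvable groups. A non-solvable group of order $p^2qr$ has a non-abelian simple section whose order divides $p^2qr$. By Lemma~\ref{lem:K_3} the only candidate is $A_5$, of order $2^2\cdot 3\cdot 5$, which forces $G\cong A_5$. This case has to be excluded by the ambient hypothesis of this section, namely $c(G)\leq 17<32=c(A_5)$ (Lemma~\ref{lem:simple1} and Theorem~\ref{SG1}). As literally stated, the lemma is false for $A_5$, so I would read it, as it is used here, under the standing assumption $c(G)\leq 17$. So assume $G$ is solvable.

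Next, since $|G|$ is cube-free with only one squared prime, every chief factor of $G$ has prime order except possibly one of order $p^2$. Non-supersolvability therefore means there is a chief factor $H/K\cong \Z_p\times\Z_p$ on which $G$ acts irreducibly. The acting group $\bar G=G/C_G(H/K)$ is a $p'$-group, since $O_p(\bar G)=1$ for an irreducible action, so $|\bar G|$ divides $qr$. In particular $\bar G$ is abelian (or $\Z_q\rtimes\Z_r$). An abelian $p'$-subgroup of $GL(2,p)$ of exponent dividing $p-1$ is diagonalizable, hence reducible. So some element of $\bar G$ has prime order $q$ (say) with $q\mid p+1$ and $q\nmid p-1$, acting as a Singer-type element of $GL(2,p)$.

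The main obstacle is pinning down $p=2$ and $q=3$, since arithmetically $q\mid p+1$ allows many pairs, for instance $p=5,q=3$, giving $\Z_5^2\rtimes\Z_3$ times something. I would first try to show that such groups have too many cyclic subgroups. If $P\cong\Z_p^2$ is normal with $q$ acting irreducibly, then $P$ alone contributes $p+2$ cyclic subgroups, and the $p^2$ conjugates of a Sylow $q$-subgroup contribute $p^2$ more. This forces $p^2+p+2\leq 17$, so $p\in\{2,3\}$. Then $q\mid p+1$ with $q\nmid p-1$ gives $(p,q)=(2,3)$, since $p=3$ would force $q=2$, which divides $p-1$. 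The case where the $p^2$-chief factor is not at the bottom of the series is handled the same way using Lemma~\ref{lem:2.5} on the quotient.

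Finally, with $p=2$ and $q=3$, $G$ has a normal $V\cong \Z_2^2$ and $G/C_G(V)\cong \Z_3$. Hence $r\neq 2,3$ acts trivially on $V$, and $C_G(V)$ has order $4r$ and contains $V$. I would show that the Sylow $r$-subgroup $R$ is normal. Take a Hall subgroup $VR$ of order $4r$, use Burnside transfer or $n_r\mid 12$, and then kill the cases $n_r>1$ by counting cyclic subgroups: $n_r\geq r+1$ subgroups of order $r$ would exceed the bound. Next I would show $R$ is central. Otherwise some element of order $3$ acts nontrivially on $R$, and this produces extra conjugates, again giving too many cyclic subgroups. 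Then $G\cong R\times (V\rtimes\Z_3)=\Z_r\times A_4$, and Lemma~\ref{lem:2.3} gives the consistency check $c=2\cdot 8=16\leq 17$.
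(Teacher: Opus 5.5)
Your structural route (a $\Z_p^2$ chief factor with irreducible $p'$-action, then the single inequality $p^2+p+2\le 17$, then identification) is genuinely different from the paper's. The paper never uses chief factors. It runs Sylow-number counts under the implicit bound $c(G)\le 17$, which you correctly identified, splits on which Sylow subgroups are normal and whether a subgroup of order $qr$ is cyclic, and settles the leftover orders with \textsf{GAP} and Lemma~\ref{lem:2.2}. However, two steps fail as written.

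First, the claim ``so some element of $\bar G$ has prime order $q$ with $q\mid p+1$ and $q\nmid p-1$'' is argued only for abelian $\bar G$, and it is false otherwise. Take $\Z_7^2\rtimes S_3$, with $S_3$ acting via its $2$-dimensional irreducible representation over $\F_7$; it has order $7^2\cdot 2\cdot 3$ and is not supersolvable. Here both $2$ and $3$ divide $p-1=6$. Moreover the element of order $3$ is diagonalizable, so ``$q$ acting irreducibly'' fails too. What your count really needs is an element $x$ of prime order with $C_{H/K}(x)=1$, since that alone gives at least $p^2$ Sylow subgroups for that prime. This does survive. Suppose $\bar G$ is non-abelian with normal subgroup $\langle x\rangle$ of order $q\mid p-1$. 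Then $x$ is diagonalizable and non-scalar (as $Z(\bar G)=1$). By irreducibility, an element of order $r$ must swap the two eigenlines of $x$, so $r=2$ and $x$ is inverted. Hence the eigenvalues of $x$ are $\lambda,\lambda^{-1}\neq 1$. Once $p\le 3$ is known, the pair $(p,q)=(2,3)$ follows from $|GL(2,2)|=6$ and $|GL(2,3)|=48$ alone, since an involution in $GL(2,3)$ is diagonalizable.

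Second, your last paragraph starts from ``$G$ has a normal $V\cong\Z_2^2$''. You only have a chief factor $H/K\cong\Z_2^2$, and passing to $G/K$ gave the bound, not normality in $G$. Suppose $K\neq 1$. Since $3$ divides $|G:C_G(H/K)|$ and hence $|G:H|$, you get $K\cong\Z_r$ and $G/K\cong A_4$. As $\mathrm{Aut}(K)$ is abelian, $G'\le C_G(K)$, and $G'K/K=(G/K)'=H/K$, so $H=G'K\le C_G(K)$. Thus $H=K\times V$, and $V$ is characteristic in $H$, hence normal in $G$.

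Relatedly, your counting fallback for $n_r>1$ does not close when $r=5$, since it only gives $1+3+4+6=14\le 17$. Use structure instead: $C_G(V)$ has order $4r$, $V$ is central in it, and the quotient is cyclic. So $C_G(V)$ is abelian and $R$ is characteristic in $C_G(V)\trianglelefteq G$.

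With these repairs the rest goes through. If $R$ were not central, a Sylow $3$-subgroup $Q$ would act fixed-point-freely on $VR$, giving $n_3=4r\ge 28$. Otherwise $G=R\times VQ\cong\Z_r\times A_4$. The resulting argument needs no computer verification at all.
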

\begin{proof}
  If $|G|=p^2qr$, then the Sylow $p$-subgroup of $G$ is not cyclic, and both Sylow $q$ and $r$-subgroups cannot be normal. This gives two cases, the first is when both Sylow $q$ and $r$-subgroups are not normal. Then by Sylow's theorems, we have $1+(1+p)+(1+q)+(1+r)\leq c(G)\leq 17$, where $1$ stands for the trivial subgroup of $G$. This shows that $p+q+r\leq 13$, and $|G|\in \{60,84,90,126,140,150,294,315,350,490,525,735\}$. By using \textsf{GAP}, any non-solvable group of these orders has more than $17$ cyclic subgroups. Thus, this case is not possible.  The next case is when one of the Sylow $q$ or $r$-subgroups is normal. Without loss of generality, assume that the Sylow $r$-subgroup is normal. Then $G$ has a subgroup of order $qr$. First, assume that a subgroup of order $qr$ is cyclic. Then it can not be normal, so $G$ has either $p$ or $p^2$ cyclic subgroups of order $qr$.
  
  \textbf{Case~-~1~}~First, suppose that $G$ has $p$ cyclic subgroups of order $qr$. Then, $(1+p)+(1+q)+2+p\leq c(G)\leq 17$, here $2$ denotes the trivial subgroup and the unique subgroup of order $r$, and $p$ denotes the number of cyclic subgroups of order $qr$. This shows that $2p+q\leq 13$, which further shows that if $p=2$ or $3$, then $q\leq 7$ and for $p=5$, we have $q\leq 3$. Now we will see all these possibilities separately.
  
\textbf{Subcase~-~1~}~First take $p=2$ and $q\leq 7$. If the Sylow $2$-subgroup is not normal, then $G$ has at least $7$ cyclic subgroups of order $2$. Also, by Sylow theorem $n_q\in \{4,r,2r,4r\}$. If $n_q\geq r$, then $7+r+2+2\leq c(G)\leq 17$, where the first $2$ denotes the trivial subgroup and the subgroup of order $r$, and the last $2$ denotes the number of cyclic subgroups of order $qr$. This implies that $r\leq 5$ and $|G|= 60, 84$ or $140$. By \textsf{GAP}, we can explicitly check that no such group of these orders has at most $17$ cyclic subgroups. If $n_q=4$, then $q=3$. In this case, by taking different possibilities for the number of cyclic subgroups of order $2,6$ and $2r$ and by using Lemma~\ref{lem:2.2}, we can check that no such group has $17$ cyclic subgroups. If the Sylow $2$-subgroup is normal in $G$, then $G$ has a unique subgroup of order $4r$, which further has $3$ cyclic subgroups of order $2r$. For $n_q\geq r$, we have $c(G)\geq 3+r+2+2+3$, where the first $2$ denotes the number of cyclic subgroups of order $qr$, the second $2$ denotes the trivial subgroup and the subgroup of order $r$, and $3$ denotes the number of cyclic subgroups of order $2r$. Since $c(G)\leq 17$, so $r\leq 7$, and $|G|=60,84,140$. By \textsf{GAP}, we can check that no such group of these orders exists. If $n_q=4$, then $q=3$ and $c(G)\geq 3+4+2+2+3$, where first $2$ denotes the number of cyclic subgroups of order $1$ and $r$, second $2$ denotes the number of cyclic subgroups of order $3r$, first $3$ denotes the number of cyclic subgroups of order $2$ and last $3$ denotes the number of cyclic subgroups of order $2r$. Now, we can consider other possibilities for the number of cyclic subgroups of order $ 6, 2r$, and $6r$, and, using Lemma~\ref{lem:2.2}, we see that none of these cases is possible.

\textbf{Subcase~-~2~}~If $p=3$ and $q\leq 7$, then $n_q\in \{3,9,r,3r,9r\}$. If the Sylow $3$-subgroup is not normal, then $G$ has at least $10$ cyclic subgroups of order $3$. If $n_q\geq r$, then $10+r+2+3\leq c(G)\leq 17$, where $2$ denotes the trivial subgroup, and a subgroup of order $r$, and $3$ denotes the number of cyclic subgroups of order $qr$. This implies that $r=2$ and $|G|=90$ or $126$. By \textsf{GAP}, we can check that no such group of these orders satisfies $13\leq c(G)\leq 17$. If $n_q=9$, then $10+9+2+3\leq c(G)\leq 17$, which is a contradiction. If $n_q=3$, then $10+3+2+3\leq c(G)$, where $2$ denotes the number of cyclic subgroups of order $1$ and $r$, and last $3$ denotes the number of cyclic subgroups of order $qr$. Since $c(G)\leq 17$, this case is also not possible. Now we are left with the case when the Sylow $3$-subgroup is normal in $G$, then $G$ has a unique abelian subgroup of order $9r$, which further has $4$ cyclic subgroups of order $3r$. For $n_q\geq r$, we have $c(G)\geq 4+r+2+3+4$, where the first $4$ denotes the number of cyclic subgroups of order $3$, $2$ denotes the trivial subgroup and the subgroup of order $r$, $3$ denotes the number of cyclic subgroups of order $qr$, last $4$ denotes the number of cyclic subgroups of order $3r$. This gives $r\leq 4$ and $|G|=90, 126$. If $n_q=9$, then $c(G)\geq 4+9+2+3\geq 18$, which is not possible. If $n_q=3$, then $c(G)\geq 4+3+2+3+4$, where the first $4$ denotes the number of subgroups of order $3$, $2$ denotes the trivial subgroup, and the subgroup of order $r$, second $3$ denotes the number of cyclic subgroups of order $qr$ and the last $4$ denotes the number of cyclic subgroups of order $3r$. In this case, either $c(G)=16$ or $G$ has one more cyclic subgroup of order $6$. By using Lemma~\ref{lem:2.2}, we can check that these cases are not possible.

    \textbf{Subcase~-~3~}~Now consider $p=5$ and $q\leq 3$. Also by Sylow theorem $n_q\in \{5,25,r,5r,25r\}$. Since $c(G)\leq 17$, so $n_q=25$ is not possible. First, assume that the Sylow $5$-subgroup is not normal in $G$. Then $G$ has at least $16$ cyclic subgroups of order $5$ and $3$ cyclic subgroups of order $1,q$ and $r$, so $c(G)\geq 19$. This is a contradiction. Therefore, the Sylow $5$-subgroup is normal in $G$, so $G$ has $6$ cyclic subgroups of order $5$. Also, $G$ has an abelian subgroup of order $25r$, which further contains $6$ cyclic subgroups of order $5r$. Thus $c(G)\geq 6+n_q+2+5+6$, where the first $6$ denotes the number of cyclic subgroups of order $5$, the last $6$ denotes the number of cyclic subgroups of order $5r$, $5$ denotes the number of cyclic subgroups of order $qr$ and $2$ denotes the trivial subgroup and the subgroup of order $r$, which is not possible.
    
\textbf{Case~-~2~}~Now assume that $G$ has $p^2$ cyclic subgroups of order $qr$. Thus, by Sylow's theorem $(1+p)+(1+q)+2+p^2\leq c(G)$, where the first and second terms denote the number of cyclic subgroups of order $p$ and $q$, respectively, $2$ denotes the number of cyclic subgroups of order $1$ and $r$. Since $c(G)\leq 17$, then $p^2+p+q\leq 13$. This implies that $p=2$ and $q\leq 7$. If a Sylow $2$-subgroup is not normal, then $c(G)\geq 7+n_q+2+4$, where $7$ denotes the number of cyclic subgroups of order $2$, $2$ denotes the trivial subgroup, and the subgroup of order $r$, and $4$ denotes the number of cyclic subgroups of order $qr$. Since $n_q\geq 4$, then either $c(G)=17$ with $q=3$ or $c(G)>17$. By using Lemma~\ref {lem:2.2}, we can say that this case is not possible. Now, assume that the Sylow $2$-subgroup is normal in $G$, then $G$ has an abelian subgroup of order $4r$, which contains $3$ cyclic subgroups of order $2r$. Thus, $c(G)\geq 3+n_q+2+3+4$, where the first $3$ denotes the number of cyclic subgroups of order $2$, $2$ denotes the trivial subgroup and the subgroup of order $r$, the second $3$ denotes the number of cyclic subgroups of order $2r$, and $4$ denotes the number of cyclic subgroups of order $qr$. If $q\geq 5$, then $n_q\geq 6$, which makes $c(G)>17$. Thus $q=3$ and $n_q=4$. This implies that $c(G)\geq 16$. Also, we can not take $c(G)=17$ as any other possibility for the cyclic subgroup gives at least two cyclic subgroups, which makes $c(G)\geq 18$. Therefore, $c(G)=16$ and $G$ can not have a cyclic subgroup of order $6$. Moreover, $G$ always has a subgroup of order $12$ because $G$ has a normal subgroup of order $4$. Hence, the subgroup of order $12$ in $G$ is $A_4$. If the subgroup of order $12$ is not unique, then they are at least $r\geq 5$, which makes $c(G)>17$. Thus $G$ has a unique subgroup of order $12$ isomorphic to $A_4$. Hence $G=\Z_r\times A_4$.

Now we are left with the case when the subgroup of order $qr$ is not cyclic. Then the subgroup of order $qr$ is isomorphic to $\Z_r\rtimes \Z_q$, so $G$ has at least $r$ subgroups of order $q$. By using this we get $(1+p)+(1+q)+2\leq 17$ and $(1+p)+r+2\leq 17$, where $2$ denotes the trivial subgroup and the subgroup of order $r$. This shows that $p+q\leq 13$ and $p+r\leq 14$. This gives $|G|\in \{90,126,150,198,294,350,490,495,525,726\}$. By using \textsf{GAP}, we can check that such a group does not exist.  
\end{proof}
\begin{lemma}\label{lem:SSGb}
    There does not exist a non-supersolvale group of order $p^3qr$, where $p,q$ and $r$ are distinct prime numbers with $13\leq c(G)\leq 17$.
\end{lemma}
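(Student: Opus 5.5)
We follow the strategy of Lemma~\ref{lem:SSGa}: Sylow counts give lower bounds on $c(G)$ which, together with $c(G)\le 17$, leave only a handful of explicit orders; these are then disposed of with \textsf{GAP} and Lemma~\ref{lem:2.2}.

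\textbf{Step 1: reduce to a non-cyclic Sylow $p$-subgroup and a non-normal Sylow $q$- or $r$-subgroup.} Let $G$ be non-supersolvable of order $p^3qr$ with $13\le c(G)\le 17$, and let $P$ be a Sylow $p$-subgroup. If $P$ is cyclic and all Sylow subgroups have cyclic structure, $G$ is metacyclic-like and supersolvable (all Sylow subgroups cyclic forces $G$ to be metacyclic, hence supersolvable). So we may assume $P$ is non-cyclic, since the Sylow $q$- and $r$-subgroups have prime order. Then $P$ alone contributes at least $c(P)\ge 5$ cyclic subgroups, counting the trivial one. Indeed, the non-cyclic groups of order $p^3$ have $c(P)\ge p+3$, and $\Z_2^3$ has $8$. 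Moreover, not both of the Sylow $q$- and $r$-subgroups can be normal. Otherwise $G/(Q\times R)\cong P$ is a $p$-group and $Q\times R$ is cyclic, which makes $G$ supersolvable.

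\textbf{Step 2: bound the primes.} Suppose first that both the Sylow $q$- and $r$-subgroups are non-normal. Then
$$c(G)\ge c(P)-1+(1+q)+(1+r)\ge p+q+r+4,$$
using $c(P)\ge p+2$. This bounds $p+q+r\le 13$ and leaves an explicit short list of orders, including $120,168,270,280,\dots$. Suppose instead that, say, the Sylow $r$-subgroup $R$ is normal and the Sylow $q$-subgroup is not. Then $n_q\ge q+1$ and $n_q$ divides $p^3r$. The group $PR$, or $P_0R$ for suitable subgroups, produces cyclic subgroups of order $pr$ whenever $R$ is centralized by an element of order $p$. We split, as in Lemma~\ref{lem:SSGa}, according to whether $QR$ is cyclic, in which case its conjugates give $p^k$ cyclic subgroups of order $qr$, or is $\Z_r\rtimes\Z_q$, in which case there are at least $r$ subgroups of order $q$. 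We also split according to whether $P$ is normal. In the first sub-split we use the explicit contributions
$$c(P)\;(\text{order }p^i),\qquad n_q\;(\text{order } q),\qquad 1\;(\text{order } r),\qquad \#\{\text{order } pr\},\qquad \#\{\text{order } qr\}.$$
Each case forces $p\in\{2,3\}$ and $q,r$ small, since $p=5$ already gives $c(P)\ge 7$ and $n_q\ge 6$ or $r\ge 6$. When $P$ is non-normal, $n_p\ge p+1$ Sylow $p$-subgroups overlap, and at least $2p+1$ or so subgroups of order $p$ appear; this pushes the count over $17$ except for $p=2$.

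\textbf{Step 3: finish the residual orders.} The surviving orders should be a short list: $2^3\cdot 3\cdot 5=120$, $2^3\cdot3\cdot7=168$, $2^3\cdot 3\cdot 11$, $2^3\cdot 3\cdot 13$, $3^3\cdot 2\cdot 5=270$, and similar. For these we check by \textsf{GAP} that no non-supersolvable group has $13\le c(G)\le 17$. In the borderline situations where the lower bound lands exactly at $16$ or $17$, we instead argue as in Lemma~\ref{lem:SSGa}. There we use Lemma~\ref{lem:2.2}: the element count $|G|=\sum c(m)\varphi(m)$ cannot be met with so few cyclic subgroups when all orders are small. For example, in $2^3\cdot3\cdot r$ with $c(G)\le 17$, the $\varphi$-weighted sum is far below $24r$ unless elements of order divisible by $r$ are abundant, and that forces more cyclic subgroups.

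\textbf{Main obstacle.} The hardest part is the case where $R$ is normal, $P\cong\Z_2^3$ or $Q_8$ or $D_8$, and $q=3$. Here $A_4$-like or $SL(2,3)$-like sections, for example $\Z_r\times SL(2,3)$ or $\Z_2\times A_4\times\Z_r$, give counts dangerously close to $17$. We expect $\Z_r\times SL(2,3)$ to have $c=c(SL(2,3))\cdot 2=26$ and $\Z_r\times S_4$ to have $c=34$. Those are fine, but non-direct products must be handled. For these we will first use Lemma~\ref{lem:2.3} for the coprime direct products. For the remaining cases we will combine the quotient bound $c(G)\ge c(G/R)+c(R)-1$ of Lemma~\ref{lem:2.5} with Lemma~\ref{lem:SSG} and Theorem~B's list in smaller $c$. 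Since $G/R$ has order $8q$ and must itself be non-supersolvable, $G/R\in\{S_4,SL(2,3),\Z_2\times A_4,\Z_2^3\rtimes\Z_7,\dots\}$, each with $c(G/R)\ge 15$. Adding the cyclic subgroups of orders $r$, $2r$ and $qr$ then exceeds $17$.
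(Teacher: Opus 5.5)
\textbf{Verdict: genuine gaps.} Your skeleton matches the paper's: Sylow counts cut $|G|$ down to a short list, then \textsf{GAP}. Two steps fail as written.

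\emph{The bound on $p$ when $R\trianglelefteq G$.} The bound $c(P)\geq p+2$ is far too weak. The claim ``$n_q\geq 6$ or $r\geq 6$'' is false for $(q,r)=(2,3)$.
\begin{itemize}
\item For $(p,q,r)=(5,2,3)$ with $n_2\in\{3,5\}$, your bounds force only $7+3+1=11$ cyclic subgroups, and $(7,2,3)$ is not excluded either.
\item The paper uses the sharp bound $c(P)\geq 2p+2$ for non-cyclic $P$ of odd order $p^3$ (attained by $\Z_{p^2}\times\Z_p$).
\item Even with that bound, $|G|=750$ survives with $12+3+1=16$ forced subgroups. The paper's own count also reaches $750$ and does not close it explicitly.
\end{itemize}

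\emph{The count over $G/R\cong SL(2,3)$ in your last paragraph.} Here $c(SL(2,3))=13$, not $\geq 15$. Suppose $SL(2,3)$ acts on $R$ through $SL(2,3)/Q_8\cong\Z_3$, which is possible when $3\mid r-1$. Then there is no element of order $3r$ and exactly one cyclic subgroup of order $2r$. So ``orders $r$, $2r$, $qr$'' only reaches $15$, not more than $17$. Separately, appealing to Theorem~B for $G/R$ is circular, since Theorem~B rests on this lemma, unless you set up an induction on $c$.

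\emph{How to fix it.} Use the quotient idea you already have, but for the whole case $R\trianglelefteq G$, not only $p=2$, $q=3$.
\begin{itemize}
\item Since $R$ is cyclic and normal, $G$ is supersolvable iff $G/R$ is. So $G/R$ is non-supersolvable of order $p^3q$, and $c(G/R)\leq c(G)-1\leq 16$ by Lemma~\ref{lem:2.5}.
\item The groups in Lemma~\ref{lem:SSG} have the wrong order. Lemma~\ref{lem1SSG} treats orders $p^\alpha q$ independently of the present lemma, and it gives $G/R\in\{\Z_2\times A_4,\ SL(2,3),\ \Z_2^3\rtimes\Z_7\}$. $S_4$ drops out because $c(S_4)=17$. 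In particular $p=2$.
\item $\mathrm{Aut}(R)$ is abelian, so $D=(G/R)'\in\{\Z_2^2,\ Q_8,\ \Z_2^3\}$ centralizes $R$.
\item Hence the full preimage of each nontrivial cyclic $\langle\bar x\rangle\leq D$ is cyclic of order $o(\bar x)r$. Both it and its subgroup of order $o(\bar x)$ lie over $\langle\bar x\rangle$ under $C(G)\to C(G/R)$.
\item This gives $c(G)\geq c(G/R)+c(D)$, which is $20$, $18$ and $24$ in the three cases.
\end{itemize}
This disposes of the normal-$R$ case, including $|G|=750$, with no \textsf{GAP}, and makes your vague sub-splitting there unnecessary.

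What remains is the case where both $Q$ and $R$ are non-normal. There $c(G)\geq c(P)+(q+1)+(r+1)$, with $c(P)\geq 5$ for $p=2$ and $c(P)\geq 2p+2$ for odd $p$. This leaves exactly $|G|\in\{120,168,270\}$ for a \textsf{GAP} check, as in the paper.
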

\begin{proof}
 If $|G|=p^3qr$, then a Sylow $p$-subgroup of $G$ is not cyclic. By~\cite[Theorem 1.1]{LowerBoundNumberofCyclicSubgroups} any Sylow $p$-subgroup has at least $5$ cyclic subgroups if $p=2$ and $2p+2$ cyclic subgroups if $p$ is an odd prime. Also, both Sylow $q$ and $r$-subgroups can not be normal in $G$. First, suppose that both the Sylow $q$ and $r$-subgroups are not normal in $G$. Then by Sylow theorem either $5+(q+1)+(r+1)+1\leq 17$ for $p=2$ or $(2p+2)+(q+1)+(r+1)\leq 17$ for $p>2$. In both cases we get $|G|=120,168$ and $270$. By \textsf{GAP}, we can check that these cases are not possible. From now onwards, assume that a Sylow $q$-subgroup is not normal and the Sylow $r$ subgroup is normal in $G$. For $p\geq 5$, any Sylow $p$ subgroup of $G$ has at least $12$ cyclic subgroups by~\cite[Theorem 1.1]{LowerBoundNumberofCyclicSubgroups}. Also, $G$ has a subgroup of order $qr$. If the subgroup of order $qr$ is cyclic, then it is not unique, and $c(G)\geq 12+3+1+2$, where $3$ denotes the number of subgroups of order $q$, $1$ denotes subgroup of order $r$ and $2$ denotes the subgroup of order $qr$. This gives $c(G)\geq 18$, which is a contradiction. If the subgroup of order $qr$ is not cyclic, then it is isomorphic to $\Z_r\rtimes \Z_q$. This shows that $G$ has at least $r$ cyclic subgroups of order $q$. In this case, either $q=2$, $r=3$, and $|G|=750$ or $r\geq 5$, which makes $c(G)\geq 18$. Therefore $p=2$ or $3$. Now we discuss these cases separately.
 
 \textbf{Case~-~1~}~If $p=3$, then $n_q\in \{3,9,27,r,3r,9r,27r\}$. First, assume that the Sylow $p$-subgroup is normal, then the Sylow $p$-subgroup has at least $8$ cyclic subgroups by~\cite[Theorem 1.1]{LowerBoundNumberofCyclicSubgroups}. Also, $G$ has a unique subgroup of order $p^3r$, which further contains $7$ non-trivial cyclic subgroups of order $p^kr$, where $k\geq 1$. Thus, $c(G)\geq 8+3+1+7$, where $3$ denotes the number of subgroups of order $q$ and $1$ denotes a unique subgroup $r$. This implies that $c(G)\geq 19$, this case is not possible. Now, assume that the Sylow $p$-subgroup is not normal, then $G$ has at least $13$ cyclic subgroups of order $p^k, 0\leq k\leq 2$. Thus, $c(G)\geq 13+n_q+1$, where $1$ denotes the cyclic subgroup of order $r$. Since $n_q\geq 1+q$ and $G$ has a subgroup of order $qr$. If the subgroup of order $qr$ is cyclic, then it is not unique. Therefore, $c(G)\geq 13+(1+q)+1+3$, where $3$ denotes the number of cyclic subgroups of order $qr$, which is a contradiction. If a subgroup of order $qr$ is non-cyclic, then $G$ has at least $r$ subgroups of order $q$. This shows that $q=2$ and $r=3$, which is also not possible as $p=3$.
 
 \textbf{Case~-~2~}~If $p=2$, then $n_q\in \{4,8,r,2r,4r.8r\}$. Then we can follow the similar steps as we did for $p=3$ and the possible orders for $G$ are $120,168$ and $280$. Now, using \textsf{GAP} we can check that no such non-supersolvable group of these orders can have at most $17$ cyclic subgroups.   
\end{proof}
\begin{theorem} 
    Let $G$ be a non-supersolvable group such that $13\leq c(G)\leq 17$, then $|G|$ has exactly $2$ distinct prime factors or $G\cong \Z_r\times A_4$, where $r$ is an odd prime number.
\end{theorem}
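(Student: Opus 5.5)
Let $G$ be non-supersolvable with $13\leq c(G)\leq 17$. By Theorem~\ref{SG1}, $G$ is solvable. A $p$-group is supersolvable, so $|\pi(G)|\geq 2$. The plan is to show $|\pi(G)|\leq 3$. When there are exactly three primes, we pin down the exponents so that Lemmas~\ref{lem:SSGa} and~\ref{lem:SSGb} apply.

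\emph{Step 1: $\pi(|G|)\leq 3$.} Write $|G|=p_1^{a_1}\cdots p_k^{a_k}$. By Richards' theorem, $c(G)\geq d(|G|)$, with equality only when $G$ is cyclic, which is excluded. If $k\geq 5$, then $d(|G|)\geq 32$. If $k=4$, then $d(|G|)\geq 16$. Moreover, since a group of squarefree order whose Sylow subgroups are all cyclic is metacyclic, and we need non-supersolvability, some Sylow subgroup must be non-cyclic, or the group has squarefree order. In the squarefree case, $G$ is a Z-group and hence supersolvable. So some $a_i\geq 2$ with a non-cyclic Sylow subgroup, and then $d(|G|)\geq 24$, which is a contradiction. (A finer count is to add one extra cyclic subgroup for each non-cyclic Sylow subgroup, since such a subgroup contains at least $p+1$ subgroups of order $p$, whereas $d$ counts only one divisor $p$.) Hence $k\leq 3$.

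\emph{Step 2: three primes.} Let $|G|=p^aq^br^c$. As above, a non-cyclic Sylow subgroup exists, say for $p$, and then the number of cyclic subgroups of $p$-power order is at least $a+p$. Combining this with the cross terms counted by $d$ gives
\[c(G)\geq (a+1)(b+1)(c+1)+p-1.\]
For this to be at most $17$, we need $(a+1)(b+1)(c+1)\leq 16$. This forces $(b,c)=(1,1)$ with $a\in\{2,3\}$. The other shapes, such as $p^2q^2r$ giving $18$, are already too big. Cases such as $a=1$ with $b=2$ reduce to the same situation after relabelling, because the non-cyclic Sylow subgroup needs exponent at least $2$. If instead a higher exponent sits on a cyclic Sylow subgroup while another Sylow subgroup is non-cyclic, the product of the $(e+1)$ factors exceeds $16$ once we add the extra $p-1\geq 1$. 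Hence $|G|=p^2qr$ or $p^3qr$. Lemma~\ref{lem:SSGb} excludes $p^3qr$, and Lemma~\ref{lem:SSGa} gives $G\cong\Z_r\times A_4$. Since $\Z_2\times A_4$ has order $2^3\cdot 3$ and so has only two prime divisors, $r$ is an odd prime.

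\emph{Main obstacle.} The delicate point is the bookkeeping in Step 2. The crude bound $d(|G|)$ must be sharpened rigorously so that it accounts for extra cyclic subgroups of order $p$ (and of order $pq^jr^l$, coming from abelian subgroups) without double counting. The cleanest route is to observe that the map from cyclic subgroups to their orders is surjective onto the divisors of $|G|$ that occur as element orders, and that order $p$ has at least $p+1\geq 3$ preimages. I would first attempt just this single extra count. If it fails for the shape $p^2q r$ versus $pq^2r$, I would add that a solvable non-supersolvable group must have a chief factor of rank at least $2$, and fall back to this.
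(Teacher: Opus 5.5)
Your argument is the paper's: Richards' bound $c(G)\ge d(|G|)$ forces $|G|\in\{p^2qr,\,p^3qr\}$ whenever at least three primes divide $|G|$, because $p$-groups and groups of squarefree order are supersolvable; Lemmas~\ref{lem:SSGa} and~\ref{lem:SSGb} then finish the proof. The sharpened inequality $c(G)\ge (a+1)(b+1)(c+1)+p-1$ that you flag as the main obstacle is not needed, since $(a+1)(b+1)(c+1)\le 17$ with some exponent at least $2$ already leaves only the shapes $(2,1,1)$ and $(3,1,1)$. That is fortunate, because your suggested justification does not work as stated: not every divisor of $|G|$ is an element order (e.g.\ $A_4$ has no element of order $6$), and a generalized quaternion Sylow $2$-subgroup, such as the one in $SL(2,3)$, has only one subgroup of order $2$.
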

\begin{proof}
Since $G$ is not supersolvable, neither $G$ is a $p$-group, nor $|G|$ is square-free. First, assume that $|G|$ has at least three prime factors. Then by Richard's theorem~\cite{richards1984remark}, $|G|$ is either $p^2qr$ or $p^3qr$. Now the result is true by Lemma~\ref{lem:SSGa} and~\ref{lem:SSGb}.
\end{proof}
\begin{lemma}\label{lem1SSG}
The only non-supersolvable groups $G$ such that $13\leq c(G)\leq 17$ and $|G|=p^\alpha q$ are $S_4, \Z_2\times A_4, SL(2,3)$ and ${\Z_2}^3\rtimes \Z_7$.
\end{lemma}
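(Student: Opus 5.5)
Fix a non-supersolvable group $G$ with $|G|=p^\alpha q$, $p\neq q$ primes and $13\le c(G)\le 17$. Note that $\alpha\ge 2$, since groups of order $pq$ are supersolvable. The plan is to bound $p$, $q$ and $\alpha$ by counting cyclic subgroups through Sylow's theorems, which leaves finitely many orders. Those orders are then settled with \textsf{GAP}, as the earlier lemmas of this section do.

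\textbf{Step 1: two structural facts.} A group with a normal Sylow $q$-subgroup $Q\cong\Z_q$ and a cyclic Sylow $p$-subgroup is metacyclic, hence supersolvable. Also, $G$ is solvable by Burnside's $p^\alpha q^\beta$ theorem. So at least one of the following holds: a Sylow $p$-subgroup $P$ is non-cyclic, or $n_q>1$.

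\textbf{Step 2: bounding the primes.} I split according to whether $P\trianglelefteq G$.

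If $P$ is normal, then $Q$ is not normal. Otherwise $G=P\times Q$, which is supersolvable as soon as $P$ is, and the non-supersolvability would have to come from $P$; but a $p$-group is always supersolvable. So $n_q\ge q+1$, and $n_q$ divides $p^\alpha$. Moreover $Q$ acts on $P/\Phi(P)$ irreducibly in a way that is not through scalars, because a scalar action would make $G$ supersolvable. This forces $q\mid p^k-1$ for some $k\ge 2$ with $k\le d(P)$.

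I then use the lower bound on cyclic subgroups of a non-cyclic $p$-group from \cite[Theorem 1.1]{LowerBoundNumberofCyclicSubgroups}: at least $p+2$ when $\alpha=2$, and at least $2p+2$ for odd $p$ and larger $\alpha$. Adding the $n_q$ subgroups of order $q$ and the trivial subgroup gives
\[
c(G)\ge c(P)+n_q\ge (p+2)+p^k .
\]
With $k\ge 2$ this yields $p\le 3$, hence $p^k\in\{4,8,9\}$ and $q\in\{3,7,2,13\}$. The bound $c(G)\le 17$ then caps $\alpha$; I expect $\alpha\le 4$ for $p=2$ and $\alpha\le 3$ for $p=3$.

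If $P$ is not normal, then $n_p\ge p+1$ and $n_p=q$. Combining this with Lemma~\ref{lem:2.2} (counting elements of each order) forces $p,q$ small as well, typically $q\in\{3,5,7\}$ and $p\in\{2,3\}$.

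\textbf{Step 3: \textsf{GAP} verification.} The admissible orders should be among $12,24,48,36,56,72,108,\dots$. I will list them explicitly and check with \textsf{GAP} which non-supersolvable groups have $13\le c(G)\le 17$. The expected survivors are $S_4$ and $SL(2,3)$ (both with $c=17$? to be checked), $\Z_2\times A_4$, and ${\Z_2}^3\rtimes\Z_7$, for which $c=1+7+8=16$.

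Groups of order 36 deserve separate mention. Those that are non-supersolvable are $\Z_3\times A_4$ and $\Z_2^2\rtimes\Z_9$; both fall under $p^2qr$ or $c\le12$ situations, or exceed the bound, and \textsf{GAP} confirms this.

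\textbf{Main obstacle.} The hardest step will be the case where $P$ is normal and non-abelian of larger order, for instance $|P|=16$ or $32$ with $q=3$. There the crude bound $c(P)+n_q+1$ may fall below 18. To handle it I will first add the cyclic subgroups of mixed order $p^iq$ coming from $C_P(Q)$. Only if that is still insufficient will I rely on exhaustive \textsf{GAP} checks over those orders.
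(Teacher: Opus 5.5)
Your strategy (Sylow counting plus a lower bound on $c(P)$, then a finite \textsf{GAP} search) is the paper's. The gap is that the branch in which $P$ is not normal is never actually argued, and that is exactly the branch containing $S_4$ (where $n_2=3$). There you only note $n_p=q$ and assert that Lemma~\ref{lem:2.2} ``forces $p,q$ small''. No inequality is derived. You have not even shown $n_q>1$ or that $P$ is non-cyclic in this branch, since your Step~1 only gives ``$P$ non-cyclic \emph{or} $n_q>1$''.

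The missing observation, which the paper uses and which makes your case split unnecessary, has three parts:
\begin{itemize}
\item $Q\trianglelefteq G$ by itself forces supersolvability, because $Q$ is cyclic normal and $G/Q$ is a $p$-group. So $n_q>1$ always.
\item $n_q=p$ would mean $q\mid p-1$, which again forces supersolvability. (In your non-normal branch it is impossible anyway, since $q\equiv 1\pmod p$ gives $q>p$.)
\item Two cyclic Sylow subgroups would make $G$ metacyclic.
\end{itemize}
So in every case $P$ is non-cyclic and $n_q\ge p^2$. Then $17\ge c(G)\ge c(P)+n_q\ge p+2+p^2$ gives $p\le 3$. The case $p=3$ would need $n_q=9$, i.e.\ $q=2\mid p-1$. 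Hence $p=2$, $n_q\in\{4,8\}$ and $q\in\{3,7\}$.

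Two smaller points on your normal branch. Your conclusion $n_q\ge p^k$ with $k\ge 2$ is right, but ``$Q$ acts irreducibly on $P/\Phi(P)$'' should read ``has an irreducible constituent of dimension at least $2$''. Also, your list $q\in\{3,7,2,13\}$ contains values that cannot occur.

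The second gap is the bound on $\alpha$. The estimates you quote, $c(P)\ge p+2$ and $c(P)\ge 2p+2$, do not depend on $\alpha$, so they cannot cap it. Without a cap the \textsf{GAP} step is not a finite verification.

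The paper uses the $\alpha$-dependent form of \cite[Theorem 1.1]{LowerBoundNumberofCyclicSubgroups}:
\begin{itemize}
\item The case $p^\alpha=2^3$ is treated separately with the weaker count of four non-trivial cyclic $2$-subgroups, as in $Q_8$. There $1+4+(q+1)\le 17$ leaves the orders $24,40,56,88$.
\item Otherwise $P$ has at least $(\alpha-1)p+1$ cyclic subgroups of order $p^k$ with $1\le k\le \alpha-1$. This gives $2\alpha+n_q\le 17$, so $\alpha\le 6$ for $q=3$ and $\alpha\le 4$ for $q=7$.
\end{itemize}
Even Richards' bound $c(G)\ge d(|G|)=2(\alpha+1)$ would at least give $\alpha\le 7$. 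So your expectation $\alpha\le 4$ for $p=2$ is not justified by the counting. Unless you supply a sharper count, the orders $2^5\cdot 3$ and $2^6\cdot 3$ must be in the search.

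Finally, $36$, $72$ and $108$ are not of the form $p^\alpha q$. Your paragraph on order $36$ is therefore beside the point, and its list of non-supersolvable groups also omits the Frobenius group $\Z_3^2\rtimes\Z_4$. Also, $c(SL(2,3))=13$, not $17$.
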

\begin{proof}
Let $G$ be a non-supersolvable group of order $p^{\alpha}q$ such that $13\leq c(G)\leq 17$. Then $\alpha \geq 2$ and $q\nmid (p-1)$ (otherwise $G$ will be supersolvable). Since a Sylow $q$-subgroup is cyclic, it can not be normal in $G$. Thus, the number of Sylow $q$-subgroups in $G$ is $n_q=1+qk\geq p^2$. Again, as a Sylow $q$-subgroup is cyclic, a Sylow $p$-subgroup can not be cyclic. Thus by Theorem $1.1$, \cite{LowerBoundNumberofCyclicSubgroups}, for $|G|=2^3q$, $G$ has at least $4$ cyclic subgroups of order $2^k$, where $k=1$ or $2$. Therefore, by counting the number of cyclic subgroups of $G$, we get
\begin{equation}
    1+4+(q+1)\leq 17,
\end{equation}
where $1$ denotes the trivial subgroup. Therefore $q\leq 11$ and $|G|=24, 40, 56$ and $88$. By using \textsf{GAP}, we get $G\cong \Z_2\times A_4, SL(2,3), S_4$ and ${\Z_2}^3\rtimes \Z_7=${\em SmallGroup$(56, 11)$}.\\
If $p^{\alpha}\not =2^3$, then by Theorem $1.1$, \cite{LowerBoundNumberofCyclicSubgroups}, at $G$ has at least $(\alpha-1)p+1$ many cyclic subgroups of order $p^k$, where $1\leq k\leq \alpha-1$. Thus, counting the number of cyclic subgroups of $G$, we get 
\begin{equation}\label{palphaq-eq-1}
    1+(\alpha-1)p+1+n_q\leq 17,
\end{equation}
where the first $1$ denotes the trivial subgroup. This gives, $p^2+p\leq 15$, i.e., $p=2$ or $3$.\\ 
If $p=3$, then $n_q=1+qk=9$ implies $q=2$, i.e., $q|(p-1)$, a contradiction. This implies that $p=2$. Now, $n_q=1+qk=2^2,2^3$ or $2^4$ (as $n_q\geq 2^5$ will make the total number of cyclic subgroups more than $17$) implies that $q=3,5$ or $7$.\\ 
If $q=7$, then $n_q=8$, then by using Equation \ref{palphaq-eq-1}, we get $\alpha\leq 4$. If $q=5$, then $n_q=16$ and hence from Equation \ref{palphaq-eq-1}, $\alpha$ has no solution. If $q=3$, then $n_q=4$ or $16$ and hence from Equation \ref{palphaq-eq-1}, we get $\alpha \leq 6$.\\
Now an exhaustive search on non-supersolvable groups of orders $2^\alpha\cdot 3$ with $2\leq \alpha\leq 6,\alpha\not =3$, and $2^\alpha\cdot 7$ with $\alpha= 2,4$ reveals that no such group $G$ with $13\leq c(G)\leq 17$ exists. Hence, the theorem holds. 
\end{proof}
\begin{lemma}\label{lem2SSG}
    Let $G$ be a non-supersolvable group such that $13\leq c(G)\leq 17$ and $|G|=p^\alpha q ^\beta$. Then $\alpha+\beta\leq 6$.
\end{lemma}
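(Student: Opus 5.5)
We argue by counting cyclic subgroups of prime-power order inside the Sylow subgroups, in the same way as in Lemma~\ref{lem1SSG}. Let $G$ be non-supersolvable with $13\leq c(G)\leq 17$ and $|G|=p^\alpha q^\beta$. By Burnside's theorem $G$ is solvable. The cyclic subgroups of $p$-power order and those of $q$-power order meet only in the trivial subgroup. Hence
\[
c(G)\geq 1+c_p+c_q,
\]
where $c_p$ and $c_q$ count the non-trivial cyclic subgroups of $G$ of $p$-power and $q$-power order respectively. Each of $c_p$, $c_q$ is at least the corresponding count inside a single Sylow subgroup, and is larger when that Sylow subgroup is not normal. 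Elements of mixed order $p^iq^j$ with $i,j\geq1$ add further terms, and we use these only when needed.

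For the lower bounds we use \cite[Theorem 1.1]{LowerBoundNumberofCyclicSubgroups}. A cyclic group of order $p^\alpha$ has $\alpha$ non-trivial cyclic subgroups. A non-cyclic $p$-group of order $p^\alpha$ has at least about $(\alpha-1)p+1$ of them (at least $4$ when $p^\alpha=8$, and so on). Suppose $\alpha+\beta\geq 7$. We split into cases according to which Sylow subgroups are cyclic.

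\textbf{Case 1: both Sylow subgroups are non-cyclic.} Then $c_p+c_q\geq (\alpha-1)p+1+(\beta-1)q+1$. With $p=2$, $q\geq3$ and $\alpha+\beta\geq7$, this already exceeds $16$ except in a few small configurations, for example $\beta=2$ with $q=3$. Those remaining configurations are treated in Case 3.

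\textbf{Case 2: one Sylow subgroup is cyclic.} Say the Sylow $q$-subgroup $Q$ is cyclic. Then $Q$ is not normal, since otherwise a normal cyclic Sylow together with a $p$-group tends to force supersolvability, or at least $n_q>1$ can be arranged. So $n_q\geq 1+q$. Each of the $n_q$ conjugates contributes a distinct subgroup of order $q$, so $c_q\geq n_q$. By Sylow's theorem $n_q$ divides $p^\alpha$ and $n_q\equiv1\pmod q$. Non-supersolvability forces $n_q\geq p^2$, as in Lemma~\ref{lem1SSG}. Combining gives
\[
1+(\alpha-1)p+1+p^2+(\beta-1)\leq 17,
\]
which leaves only $p=2$, $q\in\{3,5,7\}$, and a short list of values of $(\alpha,\beta)$ with $\alpha+\beta\geq 7$.

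\textbf{Case 3: the residual cases.} We eliminate these by one of two routes. The first is a sharper count: use that a non-normal Sylow $p$-subgroup forces at least $1+p$ subgroups of order $p$. The second is Lemma~\ref{lem:2.5} applied to a minimal normal subgroup $N$, which is elementary abelian since $G$ is solvable. If $G/N$ is non-supersolvable we induct on $|G|$, using $c(G/N)\leq c(G)-c(N)+1$. If $G/N$ is supersolvable, we use that $N$ is non-cyclic and that $G$ acts irreducibly on $N$. What is still left after this is checked by \textsf{GAP} over the finitely many orders $2^\alpha 3^\beta$, $2^\alpha 5^\beta$ and $2^\alpha 7^\beta$ with $\alpha+\beta=7$ (and slightly above), in the same way the paper handled the earlier cases.

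The main obstacle is the $2^\alpha3^\beta$ family. There the crude bounds are weakest: groups like $2$-groups with many elements of order $4$ extended by $3$-elements are near the threshold of $17$. Lemma~\ref{lem:2.2} will likely be needed to rule out configurations where the counts land exactly at $16$ or $17$. There we would compare $|G|$ with $\sum c(m)\varphi(m)$ to show that too few cyclic subgroups cannot account for all elements of a group of order at least $2^7$-ish.
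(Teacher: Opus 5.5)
\textbf{There is a genuine gap.} Your proposal never actually closes the case $\alpha+\beta\ge 7$. The Sylow-based lower bounds are too weak, and the leftover cases are handed to a \textsf{GAP} search that is neither pinned down nor feasible.

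\emph{Case~1.} For $p=2$, $q=3$ your bound reads $c(G)\ge 1+(2\alpha-1)+(3\beta-2)=2\alpha+3\beta-2$. When $\alpha+\beta=7$ this is $12+\beta\le 17$ for every admissible $\beta\in\{2,\dots,5\}$, and it is still $\le 17$ for $(\alpha,\beta)=(6,2),(5,3)$. So the bound excludes none of the $\alpha+\beta=7$ configurations and misses some with $\alpha+\beta=8$; it is not true that it fails only in ``a few small configurations''.

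\emph{Case~2.} For $p=2$ your inequality becomes $2\alpha+\beta\le 14$. This leaves, for instance, $(\alpha,\beta)=(2,10),(3,8),(4,6)$ with $q=3$, and $2^3\cdot 7^4$ with $q=7$. These orders are nowhere near ``$\alpha+\beta=7$ and slightly above'', and most lie outside the SmallGroups library.

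\emph{Case~3.} This only names possible strategies (minimal normal subgroup, induction, irreducible action) without carrying any of them out.

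\emph{A further unjustified step.} The claim that non-supersolvability forces $n_q\ge p^2$ ``as in Lemma~\ref{lem1SSG}'' does not follow from that lemma. Its argument uses $q\nmid p-1$, which is specific to $\beta=1$: the Frobenius group $(\Z_3)^2\rtimes\Z_4$ is non-supersolvable although $2\mid 3-1$.

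\textbf{The missing idea is Richards' theorem} $c(G)\ge d(|G|)$, quoted in the introduction and used repeatedly in the paper. For $|G|=p^\alpha q^\beta$ it gives $c(G)\ge(\alpha+1)(\beta+1)$. When $\alpha,\beta\ge 2$ and $\alpha+\beta\ge 7$, this is at least $3\cdot 6=18>17$. No case split on cyclicity of Sylow subgroups is needed, and no computer search.

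The bound is insufficient only when one exponent equals $1$, where it gives just $2(\alpha+1)$. That case is exactly Lemma~\ref{lem1SSG}, whose only examples have order $24$ or $56$, so $\alpha+\beta\le 4$ there. This two-step argument is the paper's entire proof.
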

\begin{proof}
By Lemma~\ref{lem1SSG}, it is clear that if $\alpha$ or $\beta$ is $1$ and $G$ is non-supersolvable, then $\alpha+\beta\leq 6$. Now, assume that $\alpha,\beta\geq 2$ and $\alpha+\beta\geq 7$. By using Richard's theorem~\cite{richards1984remark}, it is easy to see that $c(G)\geq 18$. Thus, such $\alpha$ and $\beta$ does not exist.  
\end{proof}
\begin{lemma}\label{lem3SSG}
    The only non-supersolvable group $G$ such that $13\leq c(G)\leq 17$ and $|G|=p^\alpha q^2$, where $\alpha\geq 2$ is $(\mathbb{Z}_2\times \mathbb{Z}_2)\rtimes \mathbb{Z}_{27}$.
\end{lemma}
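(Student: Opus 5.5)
The approach is to bound $p$, $q$ and $\alpha$ by Sylow counting until only finitely many orders remain, and then search those orders with \textsf{GAP}.

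\textbf{Reductions.} Let $G$ be non-supersolvable with $13\leq c(G)\leq 17$ and $|G|=p^\alpha q^2$, $\alpha\geq 2$. By Lemma~\ref{lem2SSG} we have $\alpha+2\leq 6$, so $\alpha\in\{2,3,4\}$. If both Sylow subgroups were normal, $G$ would be nilpotent, hence supersolvable. So at least one of $n_p, n_q$ exceeds $1$.

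\textbf{Counting cyclic subgroups.} Suppose first that a Sylow $q$-subgroup $Q$ (of order $q^2$) is not normal, so $n_q\geq 1+q$. Also $n_q\in\{p^i\}$ with $n_q\equiv 1 \pmod q$. If $Q$ is cyclic, the distinct conjugates contribute $n_q$ distinct cyclic subgroups of order $q^2$. If $Q\cong \Z_q^2$, then $Q$ already has $q+1$ subgroups of order $q$, and a second Sylow contributes more. In either case, combining this with the cyclic $p$-subgroups (at least those in a Sylow $p$-subgroup, bounded below by \cite[Theorem 1.1]{LowerBoundNumberofCyclicSubgroups} when it is non-cyclic) and the trivial subgroup gives an inequality such as
\[1+(\text{cyclic } p\text{-subgroups})+n_q\leq 17.\]
Since $n_q=p^i\geq p^2$ in most cases, this forces $p$ and $q$ to be small. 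Symmetrically, if a Sylow $p$-subgroup $P$ is not normal, then $n_p\in\{1,q,q^2\}$ with $n_p\equiv 1\pmod p$. If $Q$ is normal, non-supersolvability requires the action of $P$ on $Q$ to be irreducible of degree $2$ with $Q\cong\Z_q^2$, which contributes $q+1$ subgroups of order $q$. It also requires $p\nmid q-1$ together with some divisibility condition on $q+1$ or $q^2-1$, which narrows the pairs. I expect to arrive at a short list such as $(p,q)\in\{(2,3),(3,2),(2,5),(2,7),(3,5)\}$ with $\alpha\leq 4$. For $p=2$, $q=3$ the cases $|G|\in\{36,72,144\}$ appear. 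For $p=3$, $q=2$ we get $|G|\in\{36,108,324\}$, where $(\Z_2\times\Z_2)\rtimes\Z_{27}$ of order $108$ should emerge: $\Z_{27}$ acts through $\Z_3$ on $\Z_2^2$ irreducibly, giving $c=17$ presumably.

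\textbf{Finite check.} For the remaining orders I will use \textsf{GAP} to list the non-supersolvable groups and compute $c(G)$. Groups with $c(G)\leq 12$ are excluded by Lemma~\ref{lem:SSG}; for instance ${\Z_2}^2\rtimes\Z_9$ has order $36$ but $c\le 12$. The claim is that $(\Z_2\times\Z_2)\rtimes\Z_{27}$ is the only survivor.

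\textbf{Main obstacle.} The hard step is the case where $P$ is normal and non-cyclic and $Q$ acts on it: here the lower bound on $c(G)$ is weakest. I would first use the fact that non-supersolvability forces an irreducible action of $Q$ on a chief factor $P/\Phi(P)$ of dimension at least $2$. This forces $q\mid p^k+1$ type conditions and $n_q\geq p^2$. Combined with $n_q$ cyclic subgroups of order $q^2$ (or, for elementary abelian $Q$, the extra subgroups of order $q$), this yields $p^2+p+q\lesssim 15$ and bounds everything.
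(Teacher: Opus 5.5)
\textbf{Verdict: the proposal has a genuine gap.} Your outline is the paper's. You take $\alpha\le 4$ from Lemma~\ref{lem2SSG}, use Sylow counting and lower bounds on $c$ of a non-cyclic $p$-group to reach finitely many orders, and finish with \textsf{GAP}. Your split by which Sylow subgroup is normal, using irreducible actions, is sharper than the paper's split by whether $S_q$ is cyclic. The paper only uses $c(S_p)+n_q+1\le 17$, so it is left with a much longer list, including orders like $2^3\cdot 11^2$ and $7^3\cdot 5^2$.

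The gap is that you never carry the reduction out. The list $\{(2,3),(3,2),(2,5),(2,7),(3,5)\}$ is only ``expected'', and this reduction is the whole non-computational content of the lemma. Some of the constraints you offer for it are also false or unsupported:
\begin{enumerate}
\item[(i)] Irreducibility of $P$ on $Q\cong\Z_q^2$ does not force $p\nmid q-1$ when $p=2$. An element of order $4$, or $D_8$, in $GL(2,3)$ acts irreducibly on $\F_3^2$, so $\Z_3^2\rtimes\Z_4$ and $\Z_3^2\rtimes D_8$ are non-supersolvable although $2\mid 3-1$.
\item[(ii)] A condition of type $q\mid p^k+1$ would discard $(2,7)$, the $\Z_2^3\rtimes\Z_7$-type action, since $7$ divides no $2^k+1$. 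The correct condition is that $p$ has multiplicative order $d\ge 2$ modulo the order of the cyclic image of $Q$.
\item[(iii)] The case $n_q=p$, possible when $q\mid p-1$, is waved off as ``not most cases''.
\item[(iv)] For $Q\cong\Z_q^2$, your remark that a second Sylow ``contributes more'' gives only $2q+1$ subgroups of order $q$, not a bound growing with $n_q$.
\end{enumerate}
Minor: $c\bigl((\Z_2\times\Z_2)\rtimes\Z_{27}\bigr)=16$, not $17$. Indeed $c(\Z_2^2\times\Z_9)=12$, and the remaining $72$ elements all have order $27$, giving $4$ more cyclic subgroups.

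\textbf{How to close the gap along your route.}

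\emph{Case $P\trianglelefteq G$.} Since $\Phi(P)\le\Phi(G)$, Huppert's theorem ($G$ is supersolvable iff $G/\Phi(G)$ is) gives an irreducible $Q$-constituent $W$ of $P/\Phi(P)$ with $\dim W=d\ge 2$. By coprime action, $n_q=|P:C_P(Q)|\ge p^d$. If $Q$ is elementary abelian, $q$ of its subgroups of order $q$ act fixed-point-freely on $W$, so each has at least $p^d$ conjugates. Together with $c(P)\ge p+2$, this gives $p^d+p+3\le 17$. Hence $(p,d)\in\{(2,2),(2,3),(3,2)\}$, with correspondingly $q=3,7,2$.

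\emph{Case $Q\trianglelefteq G$.} Then $Q\cong\Z_q^2$ with $P$ acting irreducibly, so $C_Q(P)=1$, $N_G(P)=P$, $n_p=q^2$ and $p\mid q^2-1$. Some $g\in P$ acts fixed-point-freely on $Q$: a preimage of a generator if $P/C_P(Q)$ is cyclic, of $-I$ otherwise. The $q^2$ conjugates of $\langle g\rangle$ by elements of $Q$ are distinct cyclic subgroups. This forces $q^2+q+2\le 17$, so $(p,q)\in\{(3,2),(2,3)\}$.

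\emph{Neither Sylow subgroup normal.} Use the congruences $n_p\in\{q,q^2\}$ with $n_p\equiv 1\pmod p$, and $n_q\mid p^{\alpha}$ with $n_q\equiv 1\pmod q$. Together with the resulting counts of cyclic $p$- and $q$-subgroups, these leave only $(2,3),(2,7),(3,2)$.

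Then only the orders $36,72,144,108,324,392,784$ need a \textsf{GAP} check, far fewer than in the paper. But you have to supply this argument rather than anticipate its outcome.
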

\begin{proof}Suppose such a group $G$ exists. Then by Lemma~\ref{lem2SSG}, $\alpha=2,3$ or $4$. Now, there are the following cases.

{\bf Case 1:} Let a Sylow $q$-subgroup $S_q$ of $G$ be cyclic. Then $S_q$ is not normal in $G$. Thus the number of Sylow $q$-subgroup of $G$ is $n_q=1+qk\geq p$ and $n_q~|~p^\alpha$. Again, as $S_q$ is cyclic, the Sylow $p$-subgroup $S_p$ is not cyclic. Therefore, we get 
\begin{equation}\label{palphaq2-eq-1}
    c(S_p)+n_q+1\leq 17,
\end{equation}
where $1$ denotes a subgroup of order $q$ in $G$. For $\alpha=4$, by Theorem $1.1$, \cite{LowerBoundNumberofCyclicSubgroups}, we have $c(S_p)\geq 3p+2$. Thus from Equation \ref{palphaq2-eq-1}, we get $(3p+2)+n_q+1\leq 17$. As $n_q=1+qk\geq p$, we get $4p\leq 14$, i.e., $p=2$ or $3$. If $p=2$, then $n_q=1+qk|16$ which implies $q=3,5$ or $7$, and $|G|=2^4\cdot3^2, 2^4\cdot5^2$ and $2^4\cdot7^2$. If $p=3$, then from Equation \ref{palphaq2-eq-1}, we get $11+(1+q)+1=13+q\leq 17$. This implies that $q=2$ and $|G|=3^4\cdot2^2$.\\
For $\alpha=3$, using Equation \ref{palphaq2-eq-1} and arguing as above, we get the only possible orders of $G$ as $2^3\cdot 3^2,2^3\cdot 5^2,2^3\cdot 7^2, 3^3\cdot 2^2, 3^3\cdot 5^2, 3^3\cdot 7^2,5^3\cdot 2^2,5^3\cdot 3^2$.\\
Similarly, for $\alpha=2$, the possible orders are $2^2\cdot3^2,2^2\cdot5^2,2^2\cdot7^2,2^2\cdot{11}^2,3^2\cdot5^2,3^2\cdot7^2,5^2\cdot7^2$. Now, an exhaustive search on non-supersolvable groups of above orders reveals that no such group $G$ with $13\leq c(G)\leq 17$ exists.

{\bf Case 2:} Now, we assume that a Sylow $q$-subgroup $S_q$ is not cyclic. Then a Sylow $p$-subgroup $S_p$ is not both cyclic and normal in $G$. Further, the similar counting arguments as in Case~1 leave only finitely many possible choices for the orders of $G$, which are as follows $2^4\cdot3^2,2^4\cdot5^2,2^4\cdot7^2,3^4\cdot2^2,3^4\cdot5^2,3^4\cdot7^2,5^4\cdot2^2,5^4\cdot3^2,2^3\cdot3^2,2^3\cdot5^2,2^3\cdot7^2,2^3\cdot{11}^2,3^3\cdot2^2,3^3\cdot5^2,3^3\cdot7^2,5^3\cdot2^2,5^3\cdot3^2,5^3\cdot7^2,7^3\cdot2^2,7^3\cdot3^2,7^3\cdot5^2,2^2\cdot3^2,2^2\cdot5^2,2^2\cdot7^2,2^2\cdot{11}^2,3^2\cdot5^2,3^2\cdot7^2,3^2\cdot{11}^2,5^2\cdot7^2$. By using \textsf{GAP} and Sylow theorems, we can check that $G= (\mathbb{Z}_2\times \mathbb{Z}_2)\rtimes \mathbb{Z}_{27}=${\em SmallGroup$(108, 3)$}.
\end{proof}
\begin{lemma}\label{p3q3}
    There does not exist any non-supersolvable group $G$ of order $p^3q^3$ such that $13\leq c(G)\leq 17$.
\end{lemma}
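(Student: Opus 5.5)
Suppose $G$ is non-supersolvable of order $p^3q^3$ with $13\leq c(G)\leq 17$, and take $p<q$. I would first use Richards' theorem~\cite{richards1984remark}: $c(G)\geq d(p^3q^3)=16$, with equality exactly when $G$ is cyclic. Since $G$ is not supersolvable, it is not cyclic, so $c(G)=17$. This is very restrictive. In the decomposition of Lemma~\ref{lem:2.2}, $G$ has at least one cyclic subgroup of each order $p^iq^j$ that actually occurs as an element order, and only one ``spare'' cyclic subgroup beyond the divisor count. The plan is to show that any non-cyclic $G$ of this order produces at least two extra cyclic subgroups.

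\textbf{Sylow subgroups.} Let $S_p$ and $S_q$ be Sylow subgroups.

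If both are normal, then $G=S_p\times S_q$ is nilpotent, hence supersolvable, a contradiction. So at least one is non-normal.

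If, say, $S_q$ is not normal, then $n_q\geq 1+q$ and $n_q\mid p^3$.
\begin{itemize}
\item If $S_q$ is cyclic, $G$ has $n_q\geq 3$ subgroups of order $q^3$ instead of one, which already gives $c(G)\geq 18$.
\item If $S_q$ is not cyclic, then \cite[Theorem 1.1]{LowerBoundNumberofCyclicSubgroups} gives $c(S_q)\geq 2q+2$ for $q$ odd (and $\geq 5$ for $q=2$), which is at least $d(q^3)+2$. That is already two extra cyclic subgroups inside $S_q$ alone.
\end{itemize}
The same reasoning applies with $p$ in place of $q$. The sub-case where a $p$-group of order $p^3$ exceeds $d(p^3)=4$ by only one has to be controlled carefully.

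\textbf{Combining $p$-parts, $q$-parts and mixed orders.} I would make the counting precise by separating three kinds of cyclic subgroups: those of order $p^i$, those of order $q^j$, and those of mixed order $p^iq^j$. The baseline count is $4+4+9-1$.
\begin{itemize}
\item A non-cyclic Sylow subgroup contributes at least one extra, and at least two extra unless $p=2$ (where the minimum is $5$, i.e.\ one extra).
\item A non-normal cyclic Sylow subgroup contributes at least $p$ extras.
\item If $G$ has no element of some mixed order $p^iq^j$, the corresponding divisor term is simply missing. Then $c(G)\geq 16$ is not automatic, and I would redo the count by hand, e.g.\ via Lemma~\ref{lem:2.2} with $|G|=p^3q^3$.
\end{itemize}

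\textbf{Reduction to finitely many orders and the main obstacle.} The main obstacle is the case where few mixed-order elements exist: a non-normal Sylow subgroup gives many extra $p$-power or $q$-power subgroups, but missing mixed orders can offset them. Here I would bound the primes. From $c(S_p)+c(S_q)-1+(\text{mixed})\leq 17$ together with $n_q\geq q+1$ (or $n_p\geq p+1$), the argument mirrors Lemmas~\ref{lem1SSG} and~\ref{lem3SSG}. It should force $p,q\in\{2,3,5,7\}$ and $n_q\mid p^3$. This leaves a short list of orders such as $2^3\cdot3^3=216$, $2^3\cdot7^3$ and $3^3\cdot 13^3$. Within these, a non-normal Sylow $q$-subgroup forces $n_q\in\{p^2,p^3\}$, which makes the number of $q$-subgroups far exceed the budget, and the remaining cases I would settle by an exhaustive \textsf{GAP} check on the non-supersolvable groups of order $216$ (and any other surviving orders), confirming none has $c(G)=17$.
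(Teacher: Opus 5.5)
Your use of Richards' theorem is sound, and it sharpens something the paper does not exploit. Since $d(p^3q^3)=16$ and a non-supersolvable group is not cyclic, $c(G)=17$ exactly.

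\textbf{First gap: per-divisor bookkeeping.} Richards' bound is a global inequality. It does not say that each divisor $p^iq^j$ of $|G|$ is the order of some cyclic subgroup. So you cannot offset a surplus of prime-power-order cyclic subgroups against the baseline $16$, one divisor at a time. In a non-cyclic group many mixed orders $p^iq^j$ do not occur, and that deficit absorbs the surplus. A minimal example is $S_3$. Its Sylow $2$-subgroup is cyclic and non-normal with $n_2=3$ (``two extras''), yet $c(S_3)=5=d(6)+1$, because there is no element of order $6$. Hence ``$n_q\geq 3$ already gives $c(G)\geq 18$'' does not follow, and neither does ``two extra cyclic subgroups inside $S_q$ alone''. (Incidentally, $c(G)\geq 16$ is still automatic by Richards; what fails is the per-divisor accounting.) You name this as the main obstacle but never resolve it. The reduction to finitely many orders is only asserted (``it should force $p,q\in\{2,3,5,7\}$''). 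Your resulting list even contradicts that assertion, since it contains $3^3\cdot 13^3$.

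\textbf{Second gap: the closing step.} With $p<q$ it is true that $n_q\in\{p^2,p^3\}$. But this does not ``far exceed the budget'' in the decisive cases. For $p=2$, $q=3$ one has $n_3=4$. For $p=2$, $q=7$ with $S_7$ cyclic one has $n_7=8$, and the guaranteed cyclic subgroups of prime-power order then number only $1+8+1+1+4=15\leq 17$. These are the trivial subgroup, eight of order $7^3$, at least one each of order $7^2$ and $7$, and at least four non-trivial ones in the necessarily non-cyclic $S_2$.

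\textbf{What is missing.} To close these cases you must show which mixed-order cyclic subgroups are forced. For instance, a unique subgroup of order $2$ or $7$ is normal and yields elements of orders $14$, $98$, and so on. You then finish with Miller's formula (Lemma~\ref{lem:2.2}). This is what the paper does. It splits cases by whether $G$ has a unique subgroup of order $p$ and of order $q$, meaning cyclic or $Q_8$ Sylow subgroups versus non-cyclic ones. It bounds $c(G)$ below only by subgroups that are guaranteed to exist, and it sends $|G|=216$ to \textsf{GAP}. Replacing your divisor-baseline bookkeeping with such direct lower bounds is the missing idea.
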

\begin{proof}
We prove this result casewise:
\begin{enumerate}
    \item \textbf{$\mathbf{G}$ has unique subgroups of order $\mathbf{p}$ and $\mathbf{q}$~-~} In this case, either both the Sylow subgroups are cyclic or one is a quaternion group of order $8$ and the other one is cyclic. If both the Sylow subgroups are cyclic, then the group is metacyclic, hence supersolvable. Thus, this case is not possible. If the Sylow $p$-subgroup is $Q_8$ and the Sylow $q$-subgroup is cyclic, then the Sylow $q$-subgroup is not normal. This implies that $G$ has at least $5+(1+q)+2$ cyclic subgroups, where $5$ denotes the number of cyclic subgroups of $Q_8$, $1+q$ denotes the number of cyclic subgroups of order $q^3$, and $2$ denotes the cyclic subgroups of order $q$ and $q^2$. By using the fact $c(G)\leq 17$, we get $q=3,5$ or $7$. If $q=3$, then $|G|=216$. Using \textsf{GAP}, we can check that this case is not possible. If $q=5$, then $G$ has unique Sylow $q$-subgroup. This implies that $G$ is supersolvable, which is a contradiction. For $q=7$, $G$ has $8$ cyclic subgroups of order $7^3$, $5$ cyclic subgroups are there in $Q_8$, and at least $2$ cyclic subgroups of order $7$ and $7^2$. If the subgroup of order $7$ and $7^2$ are unique, then 
    $G$ has $2$ more cyclic subgroups of order $14$ and $98$ because $G$ has a unique cyclic subgroup of order $2$. If $G$ has more than $1$ cyclic subgroups of order $7^2$, then $G$ has either unique cyclic subgroups of order $7$ and $14$ or at least $2$ cyclic subgroups of order $7$. For both possibilities by Lemma~\ref{lem:2.2}, we can see that none of these is possible.
     \item \textbf{$\mathbf{G}$ has a unique subgroup of order $\mathbf{q}$ and $\mathbf{1+p}$ subgroups of order $p$~-~} In this case the Sylow $q$-subgroup is either cyclic or quaternion group $Q_8$. If the Sylow $q$-subgroup is cyclic, then it is not normal, otherwise $G$ will become supersolvable. First, assume that $p\geq 5$, then $G$ has at least $12$ cyclic subgroups of orders $5^k$, where $0\leq k\leq 3$. Also, $G$ has at least $5$ cyclic subgroups of order $q^3$ and at least $2$ cyclic subgroups of order $q$ and $q^2$. This implies that $c(G)>17$, which is a contradiction. Therefore $p=2$ or $3$. If $p=3$, then $G$ has at least $8$ cyclic subgroups of orders $3^k$, where $0\leq k\leq 2$. Moreover, $G$ has at least $1+q$ cyclic subgroups of order $q^3$ and at least $2$ cyclic subgroups of orders $q$ and $q^2$. By using the fact $c(G)\leq 17$, we get $q=2$ or $5$. If $q=5$, then by the Sylow theorem, $G$ has a unique subgroup of order $5^3$, which makes $G$ supersolvable. Therefore, this case is not possible. If $q=2$, then $|G|=2^3.3^3$. By \textsf{GAP}, we can check that this order is not possible. From now onwards, assume that $p=2$. Then $G$ has at least $6$ cyclic subgroups of orders $2^k$, where $0\leq k\leq 2$. Also, by Sylow's theorem $n_q=1+kq|8$, this implies that $q=3$ or $7$. If $q=7$, then $G$ has at least $8$ cyclic subgroups of orders $7^3$ and a unique cyclic subgroup of order $7$. Also, $G$ has at least one cyclic subgroup of order $7^2$. This implies that $G$ has a unique cyclic subgroup of order $2.7$ or $c(G)>17$. By counting the elements of $G$, we can show that this is not possible. If $q=3$, then $|G|=2^3.3^3$. By \textsf{GAP}, we can check that this case is not possible. Similarly, we can show that if the Sylow $q$ subgroup is isomorphic to $Q_8$, then either $|G|=2^3.3^3$ or $c(G)>17$. By \textsf{GAP}, this case is also not possible.
     \item \textbf{Subgroups of orders $\mathbf{p}$ and $\mathbf{q}$ are not unique~-~} In this case both the Sylow subgroups are non-cyclic. If either $p$ or $q\geq7$, then by \textsf{GAP} we can check that $c(G)>17$. First suppose that $p=5$, then $G$ has at least $12$ cyclic subgroups of orders $p^k$, where $0\leq k\leq 3$. If $q=2$, then $G$ has at least $6$ cyclic subgroups of orders $q^k$, where $0\leq k\leq 3$. Thus, $c(G)>17$. Similarly we can show that if $p\geq 5$ and $q=3$, then $c(G)>17$. Now, we are left with $|G|=2^3.3^3$, and by \textsf{GAP}, we can check that this order is not possible.
    \end{enumerate}
\end{proof}
The following result follows from the lemmas and theorems established in this section.

\begin{theorem}\label{thm:SSG}
    Let $G$ be a group such that $13\leq c(G)\leq 17$, then either $G$ is supersolvable or $G\cong SL(2,3), S_4, \Z_q\times A_4$, ${\Z_2}^3\rtimes \Z_7$ or $(\mathbb{Z}_2\times \mathbb{Z}_2)\rtimes \mathbb{Z}_{27}$, where $q$ is a prime number.
\end{theorem}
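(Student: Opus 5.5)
The plan is to assemble Theorem~\ref{thm:SSG} from the lemmas of this section by splitting on the number of distinct prime divisors of $|G|$. Let $G$ be non-supersolvable with $13\leq c(G)\leq 17$. Then $G$ is not a $p$-group, since finite $p$-groups are supersolvable, and $|G|$ is not square-free, since groups of square-free order are supersolvable. In particular $\pi(|G|)\geq 2$.

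\textbf{Three or more prime divisors.} By the preceding theorem (the one combining Lemma~\ref{lem:SSGa} and Lemma~\ref{lem:SSGb} with Richards' bound $c(G)\geq d(|G|)$), either $G\cong \Z_r\times A_4$ with $r$ an odd prime, or $|G|$ has exactly two prime divisors. Note that $d(p^2qrs)=24>17$ and $d(p^4qr)=20>17$, so only the orders $p^2qr$ and $p^3qr$ occur. These are exactly the cases handled by Lemmas~\ref{lem:SSGa} and~\ref{lem:SSGb}.

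\textbf{Exactly two prime divisors.} Write $|G|=p^\alpha q^\beta$, so Lemma~\ref{lem2SSG} gives $\alpha+\beta\leq 6$. I split by $\min(\alpha,\beta)$.
\begin{enumerate}
\item If one exponent equals $1$, Lemma~\ref{lem1SSG} yields $S_4$, $\Z_2\times A_4$, $SL(2,3)$ or ${\Z_2}^3\rtimes\Z_7$. The group $\Z_2\times A_4$ is the case $q=2$ of the family $\Z_q\times A_4$.
\item If one exponent equals $2$ and the other is at least $2$, Lemma~\ref{lem3SSG} yields $({\Z_2}\times{\Z_2})\rtimes\Z_{27}$. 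Here we use the symmetry of $p$ and $q$, so that the exponent $2$ may be placed on $q$.
\item Otherwise both exponents are at least $3$. Together with $\alpha+\beta\leq 6$ this forces $\alpha=\beta=3$, and Lemma~\ref{p3q3} excludes this case.
\end{enumerate}

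\textbf{Conclusion.} Collecting the outcomes gives precisely the list in the statement. Including the groups with $c(G)\leq 12$ from Lemma~\ref{lem:SSG} then recovers Theorem~B.

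The only real obstacle lies inside the cited lemmas, particularly the GAP-assisted eliminations. At the level of this theorem, the main point to check is that the case split by exponents is exhaustive. I will also double-check that the prime $q$ in $\Z_q\times A_4$ may be $2$ or $3$, which comes from Lemma~\ref{lem1SSG} and Lemma~\ref{lem3SSG} and so is covered by the phrase ``$q$ is a prime number''.
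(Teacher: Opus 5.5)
Your proposal is correct and follows the paper's route: the paper simply states that Theorem~\ref{thm:SSG} follows from the preceding theorem and Lemmas~\ref{lem1SSG}--\ref{p3q3}, and your split by the number of primes and then by exponents (using $\alpha+\beta\le 6$) is exactly the exhaustive bookkeeping behind that statement. Two slips, neither of which affects the argument: your Richards check should also rule out $p^2q^2r$ (where $d=18$), and $q=3$ does not arise from Lemma~\ref{lem3SSG} (in fact $c(\Z_3\times A_4)=20$); this is harmless, since the statement only requires every exceptional group to appear in the list.
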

\section*{Acknowledgements} 
 The second-named author would like to acknowledge the support of the IISER Berhampur institute post-doctoral fellowship during this work.

\subsection*{Data Availability Statements}
Data sharing is not applicable to this article as no datasets were generated or analyzed during the current study.

\subsection*{Competing Interests} The authors have no competing interests to declare that are relevant to the content of this article.
\bibliographystyle{abbrv}
\bibliography{refs.bib} 
\end{document}